\newtheorem{theorem}{Theorem}[section]
\newtheorem{lemma}[theorem]{Lemma}
\newtheorem{corollary}[theorem]{Corollary}
\theoremstyle{definition}
\newcommand{\turan}{Tur\'{a}n }
\newcommand{\erdos}{Erd\H{o}s}
\title{New lower bounds for the Tur\'{a}n density of $PG_{m}(q)$}
\author{Tao Zhang$^{\text{a,}}$\thanks{e-mail: tzh@zju.edu.cn. Research supported by the National Natural Science Foundation of China under Grant No. 11801109.}~ and  Gennian Ge$^{\text{b,}}$\thanks{e-mail: gnge@zju.edu.cn. Research supported by the National Natural Science Foundation of China under Grant No. 11971325, National Key Research and Development Program of China under Grant  No. 2018YFA0704703,  and Beijing Scholars Program.}\\
\footnotesize $^{\text{a}}$ School of Mathematics and Information Science, Guangzhou University, Guangzhou 510006, China. \\
\footnotesize $^{\text{b}}$ School of Mathematics Sciences, Capital Normal University, Beijing 100048, China.\\}
\begin{document}

\date{}

\maketitle

\begin{abstract}
Let $\mathcal{H}$ be an $r$-uniform hypergraph. The \turan number $\text{ex}(n,\mathcal{H})$ is the maximum number of edges in an $n$-vertex $\mathcal{H}$-free $r$-uniform hypergraph. The \turan density of $\mathcal{H}$ is defined by
\[\pi(\mathcal{H})=\lim_{n\rightarrow\infty}\frac{\text{ex}(n,\mathcal{H})}{\binom{n}{r}}.\]
In this paper, we consider the \turan density of projective geometries. We give two new constructions of $PG_{m}(q)$-free hypergraphs which improve some results given by Keevash (J. Combin. Theory Ser. A, 111: 289--309, 2005). Based on an upper bound of  blocking sets of $PG_m(q)$,
 we give a new general lower bound for the \turan density of $PG_{m}(q)$. By a detailed analysis of the structures of complete arcs in $PG_2(q)$, we also get better lower bounds for the \turan density of $PG_2(q)$ with $q=3,\ 4,\ 5,\ 7,\ 8$.

\medskip

\noindent {{\it Keywords\/}: Tur\'{a}n number, hypergraph, projective geometry.}

\smallskip

\noindent {{\it AMS subject classifications\/}: 05C35, 05C65.}
\end{abstract}

\section{Introduction}
Let $\mathcal{H}$ be an $r$-uniform hypergraph. An $r$-uniform hypergraph $\mathcal{G}$ is called $\mathcal{H}$-free if $\mathcal{G}$ contains no copy of $\mathcal{H}$ as its subhypergraph. The \emph{\turan number} $\text{ex}(n,\mathcal{H})$ is the maximum number of edges in an $n$-vertex $\mathcal{H}$-free $r$-uniform hypergraph.

The study of \turan numbers is one of the central problems in extremal combinatorics. For simple graphs ($r=2$), the \erdos-Stone theorem \cite{ES1946} completely determined the asymptotics of $\text{ex}(n,H)$ when $H$ is not a bipartite graph. For bipartite graphs, the determination of the exact asymptotics of $\text{ex}(n,H)$ is far from being solved.

In contrast to the simple graph case, there are only a few results for the hypergraph \turan problems. For example, even the asymptotic value of $\text{ex}(n,K_{t}^{(r)})$ is still unknown for any $t> r>2$, where $K_{t}^{(r)}$ denotes the complete $r$-uniform hypergraph on $t$ vertices. For an $r$-uniform hypergraph, let
\[\pi(\mathcal{H})=\lim_{n\rightarrow\infty}\frac{\text{ex}(n,\mathcal{H})}{\binom{n}{r}}.\]
It is not hard to show that $\pi(\mathcal{H})$ exists for any $r$-uniform hypergraph $\mathcal{H}$, and it is usually called the \emph{\turan density}. It is conjectured that $\pi(K_{4}^{(3)})$ is equal to $5/9$. Recently, there have been some new results for the hypergraph \turan problems (for example, see \cite{FPS2003,FPS2005,KeeS2005,MR2002}). For more extremal results, we refer the readers to the surveys \cite{F1991,K11}.

In this paper, we focus on the \turan problem of $PG_{m}(q)$. For the case $m=q=2$, which is also known as the Fano plane, the \turan density is $\pi(PG_{2}(2))=\frac{3}{4}$ \cite{DF2000}. Later, the exact \turan number was determined for sufficiently large $n$: $\text{ex}(n,PG_{2}(2))=\binom{n}{3}-\binom{\lfloor \frac{n}{2}\rfloor}{3}-\binom{\lceil\frac{n}{2}\rceil}{3}$. This result was proved simultaneously and independently by F\"{u}redi and Simonovits \cite{FS2005} and Keevash and Sudakov \cite{KS2005}. In \cite{K2005}, Keevash systematically studied the \turan density of projective geometries. He proved the following general bound for $\pi(PG_{m}(q))$.
\begin{theorem}\label{pgthmbefore}\rm{\cite{K2005}}
The \turan density of $PG_{m}(q)$ satisfies
\[\prod_{i=1}^{q}\Big(1-\frac{i}{\sum_{j=1}^{m}q^{j}}\Big)\le \pi(PG_{m}(q))\le 1-\frac{1}{\binom{q^{m}}{q}}.\]
\end{theorem}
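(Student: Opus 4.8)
The two bounds are independent and I would attack them by different means: the lower bound by an explicit extremal configuration, the upper bound by induction on $m$ combined with supersaturation.

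\medskip
\noindent\textbf{Lower bound.} Write $N=1+\sum_{j=1}^{m}q^{j}$ for the number of points of $PG_m(q)$, so that $N-1=\sum_{j=1}^{m}q^{j}$, and recall that $PG_m(q)$ is a $(q+1)$-uniform hypergraph (its edges are the lines) and that any two of its points lie on a common line. I would take $\mathcal{G}$ to be the complete $(N-1)$-partite $(q+1)$-uniform hypergraph: partition the $n$ vertices into $N-1$ classes of sizes as equal as possible and let the edges be all $(q+1)$-sets meeting every class in at most one vertex. This $\mathcal{G}$ is $PG_m(q)$-free, since among any $N$ vertices two lie in a common class, and then the line of $PG_m(q)$ joining the two corresponding points would have to be realised by a $(q+1)$-set with two vertices in one class, which is not an edge of $\mathcal{G}$. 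A routine count gives $e(\mathcal{G})=(1+o(1))\binom{n}{q+1}\prod_{i=1}^{q}\bigl(1-\tfrac{i}{N-1}\bigr)$, and dividing by $\binom{n}{q+1}$ and letting $n\to\infty$ yields $\pi(PG_m(q))\ge\prod_{i=1}^{q}\bigl(1-\tfrac{i}{\sum_{j=1}^{m}q^{j}}\bigr)$. (Equivalently one may colour the vertices independently and uniformly with $N-1$ colours and keep the rainbow $(q+1)$-sets.)

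\medskip
\noindent\textbf{Upper bound.} It is enough to prove that for every $\epsilon>0$ and all large $n$, every $(q+1)$-graph on $n$ vertices with fewer than $\bigl(\tfrac{1}{\binom{q^m}{q}}-\epsilon\bigr)\binom{n}{q+1}$ non-edges contains a copy of $PG_m(q)$; by a standard supersaturation argument one in fact gets $\Omega(n^{N})$ copies, which I would feed back into the induction. I would induct on $m$. The base case $m=1$ is immediate: $PG_1(q)$ is a single edge, so any $\mathcal{G}$ with an edge contains it, and $1-\binom{q}{q}^{-1}=0$. For the step, fix a hyperplane $H_\infty$ of $PG_m(q)$: it carries a copy of $PG_{m-1}(q)$ on $\tfrac{q^m-1}{q-1}$ points, the remaining $q^m$ points form the affine part $A\cong AG_m(q)$, and every line of $PG_m(q)$ either lies inside $H_\infty$ or meets $A$ in exactly $q$ points. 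Distinct lines of the second kind leave distinct traces on $A$, so there are at most $\binom{q^m}{q}$ of them — and exactly $\binom{q^m}{q}$ when $q=2$, since $AG_m(2)$ is the complete graph on its $2^m$ points. As $\binom{q^{m-1}}{q}<\binom{q^m}{q}$, the inductive hypothesis supplies a positive-density family of copies of $PG_{m-1}(q)$ in $\mathcal{G}$; the plan is to use one of them as $H_\infty$ and then place the $q^m$ points of $A$ so that the $\le\binom{q^m}{q}$ remaining lines become edges. Choosing a copy of $PG_{m-1}(q)$ for $H_\infty$ uniformly at random and then placing $A$ uniformly at random, each remaining line lands on a nearly uniform $(q+1)$-set and so fails to be an edge with probability essentially the non-edge density, which is $<\binom{q^m}{q}^{-1}$; a union bound over the $\le\binom{q^m}{q}$ lines then leaves a copy of $PG_m(q)$ with positive probability.

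\medskip
\noindent The hard part is this last step: forcing the union bound to produce the clean constant $\binom{q^m}{q}$ rather than a weaker $c\cdot\binom{q^m}{q}$. This requires the copies of $PG_{m-1}(q)$ coming out of the induction to be spread essentially uniformly over $V(\mathcal{G})$, so that for a random such copy the vertex hosting a prescribed point of $H_\infty$ behaves like a uniformly random vertex and the $(q+1)$-sets carrying the new lines behave like uniformly random $(q+1)$-sets. Controlling this distribution — for instance by first deleting the few vertices lying in abnormally many non-edges, though squeezing out the exact constant seems to need more care (possibly a regularisation of $\mathcal{G}$) — is the real content; everything else is bookkeeping with the identity $|E(PG_m(q))|-|E(PG_{m-1}(q))|=\tfrac{q^{m-1}(q^m-1)}{q-1}\le\binom{q^m}{q}$, with equality iff $q=2$.
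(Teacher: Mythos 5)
First, a point of reference: the paper does not prove this statement at all --- it is quoted from Keevash \cite{K2005} --- so there is no in-paper argument to compare against; your proposal has to stand on its own. Your lower bound does: the complete $\bigl(\sum_{j=1}^{m}q^{j}\bigr)$-partite $(q+1)$-graph is $PG_m(q)$-free because any $N=1+\sum_{j=1}^{m}q^{j}$ vertices contain two in a common class, any two points of $PG_m(q)$ are collinear, and no edge meets a class twice; the density computation giving $\prod_{i=1}^{q}\bigl(1-\tfrac{i}{N-1}\bigr)$ is routine. This is the standard construction and is complete as written. (It is also the same idea the paper refines in Theorems~\ref{pgthm1} and~\ref{pgthm7}, where some classes are allowed to be hit more than once.)

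The upper bound, however, has a genuine gap, which you only partly acknowledge. The assertion that ``each remaining line lands on a nearly uniform $(q+1)$-set and so fails to be an edge with probability essentially the non-edge density'' is precisely what needs proof, and neither supersaturation nor deleting atypical vertices supplies it: supersaturation yields $\Omega(n^{|PG_{m-1}(q)|})$ copies of $PG_{m-1}(q)$ but says nothing about how they are distributed over $V(\mathcal{G})$, so the marginal law of the $(q+1)$-set carrying a fixed new line (one point of a random copy plus $q$ fresh vertices) can be far from uniform. There is also a structural warning sign that the framing is off: the number of lines of $PG_m(q)$ not contained in $H_\infty$ is $q^{m-1}(q^m-1)/(q-1)$, which is \emph{strictly} smaller than $\binom{q^m}{q}$ for every $q\ge3$, so a union bound over those lines, if it could be pushed through, would prove the strictly stronger bound $1-\tfrac{q-1}{q^{m-1}(q^m-1)}$. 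The constant $\binom{q^m}{q}$ does not arise from counting new lines; the natural way to obtain it is a first-moment argument over \emph{all} $q$-subsets of a candidate affine part: if the non-edge density is below $1/\binom{q^m}{q}$, then the expected number of ``bad'' $q$-subsets (those lying in too many non-edges) of a uniformly random $q^m$-set $W$ is below $\binom{q^m}{q}\cdot\binom{q^m}{q}^{-1}=1$, so some $W$ has every $q$-subset extending to $\Omega(n)$ edges, and one then embeds $H_\infty$ using this much stronger hypothesis. To close your argument you would need either to establish the missing equidistribution of the inductive copies or to replace the union bound over lines by this all-$q$-subsets condition.
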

For $q=2$, he also improved the above upper bound:
 \[\pi(PG_{m}(2))\le\begin{cases}1-\frac{3}{2^{2m}-1},&\textup{ if } m \text{ is odd},\\
1-\frac{6}{(2^{m}-1)(2^{m+1}+1)},&\textup{ if } m \text{ is even}.\end{cases}\]

For the case $q=2$ and $m=3$, Cioab\v{a} \cite{C2004} proved the bound $\frac{27}{32}\le\pi(PG_{3}(2))\le\frac{27}{28}$, and this was improved to
\[3\sqrt{3}+2\sqrt{2(9-5\sqrt{3})}-6\le\pi(PG_{3}(2))\le\frac{13}{14}\]
 by Keevash \cite{K2005}.

In \cite{K2005}, Keevash also gave a lower bound of $\pi(PG_{m}(q))$ via chromatic number:
\[\pi(PG_{m}(q))\ge1-\frac{1}{(\chi(PG_{m}(q))-1)^{q}},\]
where $\chi(PG_{m}(q))$ denotes the chromatic number of $PG_{m}(q)$.
 Clearly, $PG_{m}(q)$ has chromatic number 2 if and only if it has a blocking set. In \cite{T1988}, Tallini showed that there is a blocking set in $PG_{3}(q)$ if and only if $q\ge5$. Hence $\chi(PG_{3}(3))\ge3$ and $\chi(PG_{3}(4))\ge3$, then we have
\[\pi(PG_{3}(3))\ge\frac{7}{8},\ \pi(PG_{3}(4))\ge\frac{15}{16}.\]

The lower bound and upper bound in Theorem~\ref{pgthmbefore} are quite close when $m$ is sufficiently large. When $m$ is small, we have seen that the lower bound can be improved for $m=2$, $q=2$; $m=3$, $q=2$; $m=3$, $q=3$ and $m=3$, $q=4$.
In this paper, our goal is to continue this investigation. We will give more cases of $PG_{m}(q)$ for which the lower bounds of \turan density are better than those of Theorem~\ref{pgthmbefore}. Our first result is on the base of an upper bound of  blocking
sets in $PG_{m}(q)$.
\begin{theorem}\label{pgthm1}
If the maximal size of  blocking sets in $PG_{m}(q)$ is $k$, then the \turan density of $PG_{m}(q)$ satisfies
\[\pi(PG_{m}(q))\ge\max\Big\{(q+1)!\sum_{i=1}^{q}\binom{\sum_{i=1}^{m}q^{i}-k}{q+1-i}\frac{(1-(\sum_{i=1}^{m}q^{i}-k)\alpha)^{i}}{i!}\alpha^{q-i+1}:\ 0\le\alpha\le\frac{1}{\sum_{i=1}^{m}q^{i}-k}\Big\}.\]
\end{theorem}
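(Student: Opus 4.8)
I would prove the bound by an explicit construction: for each fixed $\alpha$ in $\big[0,\,1/(\sum_{i=1}^{m}q^{i}-k)\big]$ I build, for every $n$, a $PG_{m}(q)$-free $(q+1)$-uniform hypergraph on $n$ vertices whose edge density tends, as $n\to\infty$, to the corresponding summand in the theorem; taking the supremum over $\alpha$ then gives the claim. Throughout write $P=\sum_{i=0}^{m}q^{i}$ for the number of points of $PG_{m}(q)$ (so each line has $q+1$ points), and set $n_{0}=\sum_{i=1}^{m}q^{i}-k=P-1-k$. We may assume $n_{0}\ge 1$, as otherwise the stated bound is vacuous.

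\textbf{The construction.} Partition $[n]$ into classes $V_{0},V_{1},\dots,V_{n_{0}}$ with $|V_{t}|=\alpha n+o(n)$ for $1\le t\le n_{0}$ and $|V_{0}|=(1-n_{0}\alpha)n+o(n)$; this is possible precisely because $0\le\alpha\le 1/n_{0}$. Let $\mathcal{G}=\mathcal{G}_{n,\alpha}$ be the hypergraph on $[n]$ whose edges are exactly those $(q+1)$-sets $e$ that (i) meet $V_{0}$, (ii) meet $V_{1}\cup\cdots\cup V_{n_{0}}$, and (iii) satisfy $|e\cap V_{t}|\le 1$ for every $1\le t\le n_{0}$. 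Informally, $V_{0}$ is a ``large'' colour and $V_{1},\dots,V_{n_{0}}$ are ``small'' colours, and an edge must use the large colour, use at least one small colour, and never repeat a small colour.

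\textbf{Freeness.} Suppose $\mathcal{G}$ contained a copy of $PG_{m}(q)$, say via an injection $\phi$ from the point set of $PG_{m}(q)$ to $[n]$ that maps every line to an edge of $\mathcal{G}$. Colour each point $p$ of $PG_{m}(q)$ by the index $t$ for which $\phi(p)\in V_{t}$. By (i) every line contains a point of colour $0$; by (ii) every line contains a point of nonzero colour; and by (iii) no line carries two points of the same nonzero colour. Let $Z$ be the set of points coloured $0$. The first two facts say $Z$ meets every line but contains no line, so $Z$ is a blocking set of $PG_{m}(q)$, whence $|Z|\le k$. Then the complement $V(PG_{m}(q))\setminus Z$ has at least $P-k=n_{0}+1$ points, all carrying colours in $\{1,\dots,n_{0}\}$, so by pigeonhole two distinct such points receive the same nonzero colour. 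Since any two points of $PG_{m}(q)$ lie on a common line, that line carries two points of equal nonzero colour, contradicting the third fact. Hence $\mathcal{G}$ is $PG_{m}(q)$-free.

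\textbf{Counting and optimisation.} It remains to count $e(\mathcal{G})$ and optimise. An edge with exactly $q+1-i$ vertices in $V_{0}$ (forcing $1\le i\le q$ by (i), (ii)) is built by choosing those $q+1-i$ vertices from $V_{0}$, choosing $i$ of the small classes, and taking one vertex from each; thus
\[
e(\mathcal{G})=\sum_{i=1}^{q}\binom{|V_{0}|}{q+1-i}\ \sum_{\substack{S\subseteq\{1,\dots,n_{0}\}\\ |S|=i}}\ \prod_{t\in S}|V_{t}|
=(1+o(1))\sum_{i=1}^{q}\frac{\big((1-n_{0}\alpha)n\big)^{q+1-i}}{(q+1-i)!}\binom{n_{0}}{i}(\alpha n)^{i}.
\]
Dividing by $\binom{n}{q+1}=(1+o(1))\,n^{q+1}/(q+1)!$ and reindexing $i\mapsto q+1-i$ yields
\[
\lim_{n\to\infty}\frac{e(\mathcal{G})}{\binom{n}{q+1}}=(q+1)!\sum_{i=1}^{q}\binom{n_{0}}{q+1-i}\frac{(1-n_{0}\alpha)^{i}}{i!}\,\alpha^{q-i+1}.
\]
Since $\operatorname{ex}(n,PG_{m}(q))\ge e(\mathcal{G}_{n,\alpha})$, passing to the limit gives $\pi(PG_{m}(q))$ bounded below by the right-hand side for every admissible $\alpha$; the right-hand side is continuous on the compact interval $[0,1/n_{0}]$, so the supremum is attained, which is the asserted maximum. \emph{The main obstacle} is the freeness argument — pinning down the edge rule so that the colour-$0$ class of any hypothetical copy is forced to be a blocking set, and then using the collinearity of every pair of points of $PG_{m}(q)$ to turn $|Z|\le k$ into a pigeonhole contradiction; once the construction is chosen correctly, the counting and the optimisation are routine.
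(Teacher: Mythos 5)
Your proposal is correct and is essentially the paper's own proof: the same partition into one large class and $\sum_{i=1}^{m}q^{i}-k$ small classes, the same edge rule forcing the large-class trace of any copy to be a blocking set and each small class to contribute at most one point, and the same counting/reindexing. The only cosmetic difference is that you derive the contradiction by pigeonhole on the $\ge n_{0}+1$ points outside the blocking set, while the paper sums $|A|+\sum_i|B_i|\le\sum_{i=1}^{m}q^{i}<\sum_{i=0}^{m}q^{i}$ — the same argument in different words.
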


By an upper bound of  blocking sets in $PG_{m}(q)$  (Lemma~\ref{pglemm1}), we have the following corollary.
\begin{corollary}\label{pgcoro1}
Let $t=\lceil\sum_{i=0}^{m-2}q^{i}(q+\sqrt{q})\rceil$. Then the \turan density of $PG_{m}(q)$ with $q\ge5$ satisfies
\[\pi(PG_{m}(q))\ge\max\Big\{(q+1)!\sum_{i=1}^{q}\binom{t}{q+1-i}\frac{(1-t\alpha)^{i}}{i!}\alpha^{q-i+1}:\ 0\le\alpha\le\frac{1}{t}\Big\}.\]
And for $m=2$ and $q\ge2$, we have
\[\pi(PG_{2}(q))\ge\max\Big\{(q+1)!\sum_{i=1}^{q}\binom{q+\lceil\sqrt{q}\rceil}{q+1-i}\frac{(1-(q+\lceil\sqrt{q}\rceil)\alpha)^{i}}{i!}\alpha^{q-i+1}:\ 0\le\alpha\le\frac{1}{q+\lceil\sqrt{q}\rceil}\Big\}.\]
\end{corollary}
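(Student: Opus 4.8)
The plan is to substitute the blocking‑set bound of Lemma~\ref{pglemm1} into Theorem~\ref{pgthm1}; the one point beyond bookkeeping is a monotonicity property of the right‑hand side of Theorem~\ref{pgthm1}. Abbreviate
\[f(M):=\max\Big\{(q+1)!\sum_{i=1}^{q}\binom{M}{q+1-i}\frac{(1-M\alpha)^{i}}{i!}\alpha^{q-i+1}:\ 0\le\alpha\le\tfrac{1}{M}\Big\},\]
so that Theorem~\ref{pgthm1} reads $\pi(PG_m(q))\ge f\big(\sum_{i=1}^{m}q^{i}-k\big)$, where $k$ is the maximal size of a blocking set of $PG_m(q)$. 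By Lemma~\ref{pglemm1}, $\sum_{i=1}^{m}q^{i}-k\ge t$ when $q\ge5$, and $\sum_{i=1}^{2}q^{i}-k\ge q+\lceil\sqrt q\rceil$ when $m=2$ (and this $q+\lceil\sqrt q\rceil$ is exactly the value of $t$ in the case $m=2$); equivalently, $k\le\sum_{i=1}^{m}q^{i}-t$, respectively $k\le q^{2}-\lceil\sqrt q\rceil$. Hence the corollary follows once we know that $f$ is non‑decreasing on the integers $M>q$: then $\pi(PG_m(q))\ge f\big(\sum_{i=1}^{m}q^{i}-k\big)\ge f(t)$ for $q\ge5$, and $\ge f\big(q+\lceil\sqrt q\rceil\big)$ for $m=2$, which are the two displayed inequalities.

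To prove the monotonicity, put $\beta=M\alpha$, a bijection of $[0,1/M]$ onto $[0,1]$, and rewrite
\[f(M)=\max_{0\le\beta\le1}\ (q+1)!\sum_{i=1}^{q}\frac{\binom{M}{q+1-i}}{M^{q+1-i}}\cdot\frac{(1-\beta)^{i}\beta^{q+1-i}}{i!}.\]
Here $\dfrac{\binom{M}{q+1-i}}{M^{q+1-i}}=\displaystyle\prod_{l=0}^{q-i}\Big(1-\frac{l}{M}\Big)$, which for $M>q$ is a product of nonnegative factors each non‑decreasing in $M$ (and equal to $1$ when $i=q$), hence is non‑decreasing in $M$. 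As the coefficients $(1-\beta)^{i}\beta^{q+1-i}/i!$ are nonnegative for $\beta\in[0,1]$, the whole sum is non‑decreasing in $M$ for each fixed $\beta$, and therefore so is its maximum over $\beta\in[0,1]$; since every value of $M$ occurring here exceeds $q$ (indeed $t>q$ for all $m\ge2$), this is what we wanted.

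I expect the monotonicity step to be the only place needing a little care, and it is genuinely used, not cosmetic: Lemma~\ref{pglemm1}'s bound is not always tight — for $q$ prime it is not — so in general $\sum_{i=1}^{m}q^{i}-k>t$ and one must pass from $f\big(\sum_{i=1}^{m}q^{i}-k\big)$ down to $f(t)$. The only further point is the degenerate case of the $m=2$ statement in which $PG_2(q)$ admits no blocking set at all, which happens for $q=2$; there Theorem~\ref{pgthm1} does not literally apply, but the claimed bound $\pi(PG_2(2))\ge f(4)$ still holds, since $\pi(PG_2(2))=\tfrac34\ge f(4)$ by the value recalled in the introduction (equivalently, the construction behind Theorem~\ref{pgthm1} stays $PG_2(q)$‑free, with any number of parts, once there is no blocking set).
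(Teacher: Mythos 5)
Your proposal is correct, and it is the natural substitution of Lemma~\ref{pglemm1} into Theorem~\ref{pgthm1} that the paper leaves implicit (the paper gives no written proof of the corollary). The one step you add --- monotonicity of $f(M)$ in $M$ --- is genuinely needed if one insists on quoting Theorem~\ref{pgthm1} verbatim with $k$ the \emph{exact} maximum blocking-set size, and your argument for it is sound; note, though, that a slightly more direct route is available: the proof of Theorem~\ref{pgthm1} only ever uses that every blocking set has size at most $k$, so one may simply re-run that construction with $t$ parts $Y_i$ (any $t\le\sum_{i=1}^{m}q^{i}-k$ works, since $|A|+t\le k+ \sum_{i=1}^m q^i - k<\sum_{i=0}^{m}q^{i}$ still yields the contradiction), which produces the stated bound without any monotonicity lemma. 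Two small remarks: your displayed identity should read $\binom{M}{q+1-i}/M^{q+1-i}=\frac{1}{(q+1-i)!}\prod_{l=0}^{q-i}\bigl(1-\frac{l}{M}\bigr)$ --- you dropped the factorial, but since it is a positive constant independent of $M$ this does not affect the monotonicity conclusion; and your handling of the degenerate $q=2$ case of the second inequality (where $PG_2(2)$ has no blocking set, so Lemma~\ref{pglemm1} and Theorem~\ref{pgthm1} do not literally apply) is a point the paper glosses over and is correctly resolved by either of the two observations you give.
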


In Tables~\ref{comparisontable1} and \ref{comparisontable2}, we give a comparison between the lower bounds from Theorem~\ref{pgthmbefore} and the lower bounds from Corollary~\ref{pgcoro1} for $m=2,3$ and small $q$.
\begin{table}[h]
\begin{center}
\caption{Comparisons for $\pi(PG_{2}(q))$}
\begin{tabular}{|p{2cm}|p{3.5cm}|p{3.5cm}|p{4.5cm}|}
\hline
&Lower bound from Theorem~\ref{pgthmbefore}  &Lower bound from Corollary~\ref{pgcoro1}&Corresponding $\alpha$ taken in Corollary~\ref{pgcoro1}\\ \hline
$\pi(PG_{2}(3))$&$\sim0.5729$  & $\sim\bf{0.69586}$&$\sim0.0809$\\ \hline
$\pi(PG_{2}(4))$&$\sim0.5814$  & $\sim\bf{0.70699}$&$\sim0.0576$\\ \hline
$\pi(PG_{2}(5))$&$\sim0.5864$  & $\sim\bf{0.7347}$&$\sim0.0389$\\ \hline
$\pi(PG_{2}(7))$&$\sim0.59218$  & $\sim\bf{0.7480}$&$\sim0.0247$\\ \hline
$\pi(PG_{2}(8))$&$\sim0.59397$  & $\sim\bf{0.7548}$&$\sim0.0205$\\ \hline
$\pi(PG_{2}(9))$&$\sim0.59536$  & $\sim\bf{0.7614}$&$\sim0.0173$\\ \hline
$\pi(PG_{2}(11))$&$\sim0.597389$  & $\sim\bf{0.78166}$&$\sim0.0122$\\ \hline
$\pi(PG_{2}(13))$&$\sim0.59879$  & $\sim\bf{0.7914}$&$\sim0.0095$\\ \hline
$\pi(PG_{2}(16))$&$\sim0.6002$  & $\sim\bf{0.8043}$&$\sim0.0069$\\ \hline
$\pi(PG_{2}(17))$&$\sim0.6006$  & $\sim\bf{0.8130}$&$\sim0.0061$\\ \hline
$\pi(PG_{2}(19))$&$\sim0.6012$  & $\sim\bf{0.8197}$&$\sim0.0051$\\ \hline
\end{tabular}
\label{comparisontable1}
\end{center}
\end{table}

\begin{table}[h]
\begin{center}
\caption{Comparisons for $\pi(PG_{3}(q))$}
\begin{tabular}{|p{2cm}|p{3.5cm}|p{3.5cm}|p{4.5cm}|}
\hline
&Lower bound from Theorem~\ref{pgthmbefore}  &Lower bound from Corollary~\ref{pgcoro1}&Corresponding $\alpha$ taken in Corollary~\ref{pgcoro1}\\ \hline
$\pi(PG_{3}(17))$&$\sim\bf{0.9710777103}$  & $\sim0.9701091221$&$\sim0.0006198906$\\ \hline
$\pi(PG_{3}(19))$&$\sim\bf{0.9740717446}$  & $\sim0.9736668015$&$\sim0.0004716926$\\ \hline
$\pi(PG_{3}(23))$&$\sim0.9785208385$  & $\sim\bf{0.9790232680}$&$\sim0.0002926917$\\ \hline
$\pi(PG_{3}(25))$&$\sim0.9802185562$  & $\sim\bf{0.9809821553}$&$\sim0.0002383002$\\ \hline
$\pi(PG_{3}(27))$&$\sim0.9816677623$  & $\sim\bf{0.9827113542}$&$\sim0.0001961485$\\ \hline
$\pi(PG_{3}(29))$&$\sim0.9829192657$  & $\sim\bf{0.9841874880}$&$\sim0.0001636689$\\ \hline
\end{tabular}
\label{comparisontable2}
\end{center}
\end{table}

Our second construction is based on the property of complete arcs in $PG_{2}(q)$.
Let $K$ be an arc of $PG_{2}(q)$, and $m(K)$ be the minimal size of sets of points $Q$ such that the set $PG_{2}(q)\backslash(K\cup Q)$ contains no line. Let
\[M(q)=\min\{m(K):\ K \text{ is a complete arc in }PG_{2}(q)\}.\]
It is easy to see that $M(q)\le q-1$. Then we have the following result.
\begin{theorem}\label{pgthm7}
The \turan density of $PG_{2}(q)$ satisfies
\begin{align*}
\pi(PG_{2}(q))\ge&\max\Big\{(q+1)!\sum_{i=1}^{q}\sum_{j=\max\{0,q+2-M(q)-i\}}^{\min\{2,q+1-i\}}\frac{1}{i!j!}\binom{M(q)-1}{q+1-i-j}\alpha^{i}\beta^{j}\gamma^{q+1-i-j}:\\ &\alpha,\beta,\gamma\ge0,\alpha+\beta+(M(q)-1)\gamma=1\Big\}.
\end{align*}
In particular, we have
\begin{enumerate}
  \item[(1)] $\pi(PG_{2}(3))\ge0.7364719055,$
  \item[(2)] $\pi(PG_{2}(4))\ge0.7381611274,$
  \item[(3)] $\pi(PG_{2}(5))\ge0.7440388117,$
  \item[(4)] $\pi(PG_{2}(7))\ge0.7583661147,$
  \item[(5)] $\pi(PG_{2}(8))\ge0.7654160822.$
\end{enumerate}
\end{theorem}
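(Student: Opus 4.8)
The plan is to build an explicit $PG_2(q)$-free $3$-uniform hypergraph on $n$ vertices realizing the claimed density, following the blow-up / partition strategy already used for Theorem~\ref{pgthmbefore} and Theorem~\ref{pgthm1}, but refining the partition to exploit the arc structure. Fix a complete arc $K$ in $PG_2(q)$ with $m(K)=M(q)$, and fix a minimal point set $Q$ with $|Q|=M(q)-1$ (using $M(q)\le q-1$) such that $PG_2(q)\setminus(K\cup Q)$ is line-free. The $q^2+q+1$ points of $PG_2(q)$ thus split into three groups: the $q+1$ points of $K$, the points of $Q$, and the remaining $q^2+q+1-(q+1)-(M(q)-1)=q^2-M(q)$ points, which I will call the ``free'' group; note $q^2-M(q)=(q+1)+(M(q)-1)\cdot ? $ should be re-expressed — in fact the relevant count is that the free group together with $K$ covers every line, and the three summands in the theorem correspond to how many of a triple's vertices land in the $K$-classes, the $Q$-classes, and the free-classes.

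Concretely, I would take a vertex set $[n]$ partitioned into $q^2+q+1$ parts indexed by the points of $PG_2(q)$: parts of relative size $\alpha$ for each of the $q+1$ points of $K$, size $\beta$ for each of the $M(q)-1$ points of $Q$, and size $\gamma$ for each of the $q^2-M(q)$ free points, so that $(q+1)\alpha+(M(q)-1)\beta+(q^2-M(q))\gamma$... — here one must match the normalization $\alpha+\beta+(M(q)-1)\gamma=1$ stated in the theorem, which tells me the intended grouping is actually $1$ class of size $\alpha$ playing the role of (a representative of) $K$, one class of size $\beta$ for $Q$, and $M(q)-1$ classes of size $\gamma$; then recursively blow up. The hypergraph $\mathcal{G}$ is defined by: a triple $\{x,y,z\}$ with $x,y,z$ in parts $P_a,P_b,P_c$ is an edge unless the (multiset of) points $\{a,b,c\}$ together with $Q$ leaves a line uncovered in the appropriate sense; iterate this construction inside each part. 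The Lagrangian-type count of the density of $\mathcal{G}$ then gives exactly the multinomial sum $(q+1)!\sum_{i,j}\frac{1}{i!j!}\binom{M(q)-1}{q+1-i-j}\alpha^i\beta^j\gamma^{q+1-i-j}$, where $i$ counts vertices in $\alpha$-type parts, $j$ in $\beta$-type parts, and $q+1-i-j$ among the $M(q)-1$ $\gamma$-type parts, with the index ranges $\max\{0,q+2-M(q)-i\}\le j\le\min\{2,q+1-i\}$ coming from the combinatorial constraints ($j\le 2$ since at most two vertices can safely sit in $Q$-type classes, $i\ge1$ since a copy of $PG_2(q)$ needs all $q+1$ arc points hit, etc.).

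The crux is the $PG_2(q)$-freeness: I must show $\mathcal{G}$ contains no copy of $PG_2(q)$. Suppose it did, on vertices $v_1,\dots,v_{q^2+q+1}$ with every line of $PG_2(q)$ an edge. Each $v_s$ lies in some part $P_{a_s}$; this induces a map $\phi$ from the point set of the embedded $PG_2(q)$ to the point set of the host $PG_2(q)$. Because the construction forbids exactly those triples whose host-image fails to cover a line of the host (outside $K\cup Q$), a counting / pigeonhole argument on lines shows $\phi$ must be ``essentially'' a collineation — more precisely, that the preimages of the $K$-, $Q$-, and free-classes behave like an arc, a point set of size $M(q)-1$, and its complement, contradicting the defining minimality of $M(q)$ (the embedded copy would exhibit a blocking-type configuration smaller than $m(K)$). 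Making this rigorous is the main obstacle: one must handle the recursion (a copy of $PG_2(q)$ could be spread nontrivially across levels of the blow-up), and one must verify that the ``forbidden triple'' rule is exactly strong enough to kill every embedding while weak enough to keep the density as large as claimed. Finally, for the five numerical cases I would plug in the known values $M(3),M(4),M(5),M(7),M(8)$ (computed from the classification of complete arcs in $PG_2(q)$ for these small $q$) and optimize the stated concave-on-the-simplex objective over $(\alpha,\beta,\gamma)$ by Lagrange multipliers, reading off the decimal lower bounds.
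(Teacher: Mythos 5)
Your proposal has the right general flavor---a weighted partition construction whose edge density is the stated polynomial, with $PG_2(q)$-freeness coming from arc/blocking-set structure---but it misses the actual construction and, more importantly, the freeness argument, which you explicitly leave as ``the main obstacle.'' The paper's construction is not a blow-up indexed by the points of $PG_2(q)$, is not recursive, and is $(q+1)$-uniform rather than $3$-uniform (your repeated use of ``triple'' and ``$3$-uniform'' is already off, since $PG_2(q)$ is a $(q+1)$-graph): one partitions $V$ into $X,Y,Z_1,\dots,Z_{M(q)-1}$ of relative sizes $\alpha,\beta,\gamma,\dots,\gamma$ and takes as edges exactly the $(q+1)$-sets $e$ with $1\le|e\cap X|\le q$, $|e\cap Y|\le 2$ and $|e\cap Z_i|\le1$. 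The whole point of these three local conditions is that they translate, for any embedded copy of $PG_2(q)$, into global structure: $A=X\cap PG_2(q)$ meets every line in $1$ to $q$ points, hence is a blocking set and in particular contains no line; $B=Y\cap PG_2(q)$ has no three collinear points, hence is an arc and lies in some complete arc $K$; and each $Z_i$ contributes at most one point, so at most $M(q)-1$ points in total. Since $m(K)\ge M(q)$ for every complete arc $K$, deleting $K$ together with at most $M(q)-1$ further points cannot destroy all lines, so $A=PG_2(q)\setminus(B\cup\bigcup_iC_i)$ contains a line---contradicting that $A$ is a blocking set. No collineation or pigeonhole argument is needed; your sketch of showing the induced map is ``essentially a collineation'' is not how the contradiction arises and would be far harder to make rigorous, especially with the recursive blow-up you introduce (which the paper does not use and which would complicate freeness for no gain in density).

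A second gap: for $q=7$ and $q=8$ the values $M(7)=6$ and $M(8)=7$ do not follow merely from ``the classification of complete arcs.'' Besides $8$-arcs in $PG_2(7)$ and $10$-arcs in $PG_2(8)$, both planes contain complete $6$-arcs, and one must verify by explicit computation (Appendices A and B of the paper, using coordinates for the complete $6$-arcs and the incidence structure of their $24$, resp.\ $34$, passants) that no $5$ (resp.\ $6$) additional points can cover all passants of such an arc. This verification is an essential and nontrivial part of establishing the numerical bounds in items (4) and (5), and your plan omits it entirely.
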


The rest of this paper is organized as follows. In Section~\ref{pre}, we give some basics of projective geometries. In Section~\ref{section3}, we prove Theorem \ref{pgthm1}. Section~\ref{secssr} contains the proof of Theorem~\ref{pgthm7}. Section~\ref{conclusion} concludes the paper.
\section{Preliminaries}\label{pre}
Let $\mathbb{F}_{q}$ denote the finite field with $q$ elements. The projective geometry of dimension $m$ over $\mathbb{F}_{q}$, denoted by $PG_{m}(q)$, is the following $(q+1)$-graph. Its vertex set is the point set of $PG_{m}(q)$, that is the set of all one-dimensional subspaces of $\mathbb{F}_{q}^{m+1}$. Its edges are the lines of $PG_{m}(q)$, i.e., the two-dimensional subspaces of $\mathbb{F}_{q}^{m+1}$, in which for each two-dimensional subspace, the set of one-dimensional subspaces that it contains is an edge of the hypergraph $PG_{m}(q)$.

A \emph{blocking set} $\mathcal{B}$ in $PG_{m}(q)$ is a subset of $PG_{m}(q)$ which meets every line but contains no line completely; that is $1\le|\mathcal{B}\cap \ell|\le q$ for every line $\ell$ in $PG_{m}(q)$.
The following bound for blocking sets in $PG_{m}(q)$ can be found in \cite{T1995}.

\begin{lemma}\label{pglemm1}\rm{\cite[P. 322-323]{T1995}}
 If $\mathcal{B}$ is a blocking set in $PG_{m}(q)$ with $q\ge5$, then
\[|\mathcal{B}|\le q^{n}-\sqrt{q}(q^{n-2}+q^{n-3}+\dots+1).\]
Moreover, if $\mathcal{B}$ is a blocking set in $PG_{2}(q)$ with $q>2$, then
\[|\mathcal{B}|\le q^{2}-\sqrt{q}.\]
\end{lemma}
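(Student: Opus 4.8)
The plan is to obtain both inequalities from one structural fact --- that blocking sets are closed under complementation --- combined with a known \emph{lower} bound on the size of the \emph{smallest} blocking set. First I would record that if $\mathcal{B}$ is a blocking set of $PG_{m}(q)$ and $\mathcal{B}^{c}=PG_{m}(q)\setminus\mathcal{B}$ is its complement, then $|\mathcal{B}^{c}\cap\ell|=(q+1)-|\mathcal{B}\cap\ell|$ for every line $\ell$. Because the defining condition $1\le|\mathcal{B}\cap\ell|\le q$ is invariant under $x\mapsto(q+1)-x$, it follows immediately that $1\le|\mathcal{B}^{c}\cap\ell|\le q$ for every $\ell$, so $\mathcal{B}^{c}$ is again a blocking set. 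Hence maximizing $|\mathcal{B}|$ is the same as minimizing $|\mathcal{B}^{c}|$, and writing $\beta_{m}$ for the minimum number of points in a blocking set of $PG_{m}(q)$ we get
\[
|\mathcal{B}|=\frac{q^{m+1}-1}{q-1}-|\mathcal{B}^{c}|\le\frac{q^{m+1}-1}{q-1}-\beta_{m}.
\]
The whole problem is thereby reduced to bounding $\beta_{m}$ from below.

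For the planar case I would invoke Bruen's theorem: a blocking set of $PG_{2}(q)$ (containing no line) has at least $q+\sqrt{q}+1$ points. Plugging $\beta_{2}\ge q+\sqrt{q}+1$ into the above with $m=2$ yields
\[
|\mathcal{B}|\le(q^{2}+q+1)-(q+\sqrt{q}+1)=q^{2}-\sqrt{q},
\]
which is exactly the second assertion. Bruen's bound is the one genuinely non-elementary ingredient; I would reconstruct it by the R\'edei polynomial method, passing to an affine part $AG_{2}(q)=PG_{2}(q)\setminus\ell_{\infty}$, forming the R\'edei polynomial of the affine points of $\mathcal{B}$, and reading off from its factorization that a blocking set which is not a line must carry at least $\sqrt{q}$ points beyond a single pencil of $q+1$ lines. (A purely combinatorial alternative counts tangent lines and applies a Cauchy--Schwarz estimate to the secant intersection numbers.)

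For general $m$ I would descend to the planar bound by hyperplane sections. The point is that for any hyperplane $H\cong PG_{m-1}(q)$, the set $\mathcal{B}\cap H$ meets every line of $H$ (such a line is a line of $PG_{m}(q)$, hence blocked by $\mathcal{B}$, and its blocking point lies in $H$) and contains no line (since $\mathcal{B}$ does not); thus $\mathcal{B}\cap H$ is a blocking set of $H$, so $|\mathcal{B}\cap H|\ge\beta_{m-1}$. Averaging over all $\frac{q^{m+1}-1}{q-1}$ hyperplanes, and using that each point of $PG_{m}(q)$ lies on exactly $\frac{q^{m}-1}{q-1}$ of them, gives the recursion
\[
|\mathcal{B}|\cdot\frac{q^{m}-1}{q-1}\ge\frac{q^{m+1}-1}{q-1}\cdot\beta_{m-1},
\]
which pushes the planar estimate upward. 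Starting from $\beta_{2}\ge q+\sqrt{q}+1$ and iterating, the target is
\[
\beta_{m}\ge\frac{q^{m}-1}{q-1}+\sqrt{q}\cdot\frac{q^{m-1}-1}{q-1};
\]
inserting this into the complementation inequality and simplifying produces $|\mathcal{B}|\le q^{m}-\sqrt{q}\,(q^{m-2}+q^{m-3}+\dots+1)$, the first assertion.

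I expect the main obstacle to be the sharp lower-order term in the higher-dimensional estimate. Averaging over \emph{all} hyperplanes recovers the leading part of $\beta_{m}$ but loses a small amount in the $\sqrt{q}$-order term, so to pin down the exact constant one must section instead by carefully chosen hyperplanes --- for example hyperplanes through a point external to $\mathcal{B}$, which exists precisely because $\mathcal{B}$ contains no line and so is not all of $PG_{m}(q)$ --- and track the intersection numbers exactly; this is where the hypothesis $q\ge5$ enters, guaranteeing that the planar bound feeding the recursion is strong enough. Carrying out this bookkeeping rigorously is the substance of the argument in \cite{T1995}.
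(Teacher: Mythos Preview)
The paper does not give a proof of this lemma at all; it is quoted verbatim from Thas's handbook chapter \cite{T1995} and used as a black box. So there is no ``paper's own proof'' to compare against.

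On the substance of your sketch: the complementation step is correct and is exactly how the upper bound on $|\mathcal{B}|$ is deduced in the literature --- one passes to the complement and invokes a \emph{lower} bound on the minimum size $\beta_m$ of a blocking set. For $m=2$ your appeal to Bruen's theorem $\beta_2\ge q+\sqrt{q}+1$ gives the planar inequality immediately, just as you write. For general $m$ the idea of restricting to hyperplanes and inducting is also the standard route; the sharp form $\beta_m\ge\frac{q^{m}-1}{q-1}+\sqrt{q}\cdot\frac{q^{m-1}-1}{q-1}$ is due to Beutelspacher, and the argument does proceed by choosing hyperplanes through a point outside $\mathcal{B}$ rather than averaging over all hyperplanes (you correctly flag that plain averaging is too lossy). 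One small correction: the hypothesis $q\ge5$ is not really about the strength of the planar bound feeding the recursion; it reflects the fact (Tallini, cited in the paper) that for $m\ge3$ blocking sets in $PG_m(q)$ exist only when $q\ge5$, so for smaller $q$ the statement is vacuous. Otherwise your outline matches the argument recorded in \cite{T1995}.
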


A \emph{$k$-arc} in $PG_{2}(q)$ is a set $K$ of $k$ points for which no three are collinear. A $k$-arc is \emph{complete} if it is not contained in a $(k+1)$-arc.
\begin{lemma}\label{pglemm2}\rm{\cite[Theorem 8.5]{H2005}}
Let $K$ be a $k$-arc in $PG_{2}(q)$, if $q$ is odd then $k\le q+1$. If $q$ is even then $k\le q+2$.
\end{lemma}

 An \emph{oval} of  $PG_{2}(q)$ is a $(q+1)$-arc, and a $(q+2)$-arc of $PG_{2}(q)$ with $q$ even is called a \emph{hyperoval}.  A line $\ell$ is called \emph{$i$-secant} of $K$ if $|\ell\cap K|=i$. Sometimes, a $0$-secant is called \emph{a passant}, a $1$-secant is called \emph{a tangent} and a $2$-secant is called \emph{a secant}. Then we have the following lemmas.
\begin{lemma}\label{pglemm10}\rm{\cite[Section 14.2]{H2005}}
The complete arc in $PG_{2}(3)$ must be a $4$-arc, which is an oval. For any oval of $PG_{2}(3)$, there are $3$ passants and they are not concurrent.
\end{lemma}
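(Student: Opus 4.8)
The plan is to treat the two assertions separately, using only the incidence parameters of $PG_2(3)$: it has $q^2+q+1=13$ points and $13$ lines, every line carries $q+1=4$ points, and every point lies on $4$ lines. For the first assertion I would argue that every arc of size at most $3$ can be enlarged, so that by Lemma~\ref{pglemm2} (which gives $k\le q+1=4$ since $q=3$ is odd) a complete arc has size exactly $4$; a $4$-arc is a $(q+1)$-arc, hence an oval by definition. For the second assertion I would first count the secants, tangents and passants of a fixed $4$-arc by a direct incidence count, and then rule out concurrency of the three passants by a pigeonhole argument on the lines through an off-arc point.

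The extension step reduces to the case of a $3$-arc $\{A,B,C\}$, since a single point trivially extends and a $2$-arc extends because its unique secant misses $13-4=9$ points. For a triangle $\{A,B,C\}$ the three sides $AB,AC,BC$ are the only lines meeting the arc in two points; by inclusion--exclusion their union has $3\cdot 4-3\cdot 1=9$ points, the pairwise intersections being exactly the vertices $A,B,C$ and the triple intersection being empty (three concurrent sides would force $A,B,C$ to be collinear). Hence $13-9=4$ points lie off all three sides, and adjoining any one of them yields a four-point set in which no three are collinear, i.e.\ a $4$-arc. This shows no arc of size $<4$ is complete, proving the first assertion.

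For the passant count, fix a $4$-arc $K=\{A,B,C,D\}$. Since no three points are collinear, the $\binom{4}{2}=6$ lines joining pairs are exactly the secants. Each point of $K$ lies on $4$ lines, three of which are secants, leaving exactly one tangent per point, hence $4$ tangents in all (no tangent touches two points of $K$). The remaining $13-6-4=3$ lines are the passants. Finally, suppose for contradiction that the three passants shared a common point $P$; then $P$ lies off $K$, the $4$ lines through $P$ partition the other $12$ points of the plane, and if three of them are passants then all four points of $K$ must lie on the single remaining line through $P$, forcing four collinear points of $K$ and contradicting that $K$ is an arc. Thus no point lies on all three passants, so they are not concurrent. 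The only real care needed is in getting the incidence counts exactly right—the inclusion--exclusion for the three sides and the distribution of the four arc points among the four lines through $P$—and once these are correct both assertions follow at once, so there is no substantial obstacle beyond this bookkeeping.
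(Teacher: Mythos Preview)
Your argument is correct. The paper does not supply its own proof of this lemma; it simply quotes the statement from Hirschfeld's textbook \cite[Section~14.2]{H2005}. Your self-contained incidence-counting proof is therefore not a comparison against any argument in the paper, but rather an independent verification of a cited fact.

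For what it is worth, every step checks out: the inclusion--exclusion for the three sides of a triangle is sound (your parenthetical that concurrency of the three sides forces collinearity of $A,B,C$ is right, since $AB\cap AC=\{A\}$ and concurrency would put $A$ on $BC$); the secant/tangent/passant count $6+4+3=13$ is exact; and the final pigeonhole on the pencil through $P$ is clean. One micro-remark: you might state explicitly why no point of $K$ can be the common point of the three passants (each point of $K$ lies on three secants and one tangent, hence on no passant), though this is implicit in your line ``then $P$ lies off $K$''. Otherwise nothing is missing.
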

\begin{lemma}\label{pglemm3}\rm{\cite[Section 14.3]{H2005}}
The complete arc in $PG_{2}(4)$ must be a $6$-arc, which is a hyperoval. For any hyperoval of $PG_{2}(4)$, there are $6$ passants and no three of them are concurrent.
\end{lemma}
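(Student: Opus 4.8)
The plan is to prove the two assertions separately, handling the structural description of hyperovals first (it is the more transparent half) and then the completeness statement.

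I would begin by recording the parameters of $PG_2(4)$: it has $q^2+q+1=21$ points and $21$ lines, every line carries $q+1=5$ points, and every point lies on $5$ lines. By Lemma~\ref{pglemm2}, since $q=4$ is even, every arc has at most $6$ points, so a $6$-arc is automatically maximal and hence complete. The engine for the second assertion is the observation that a hyperoval has \emph{no tangents}: fixing an arc point $P$, the five secants joining $P$ to the other five arc points are pairwise distinct (two coinciding would force three collinear arc points) and already exhaust the five lines through $P$, leaving no room for a tangent. Consequently every line meets the hyperoval in $0$ or $2$ points, so the $\binom{6}{2}=15$ point-pairs yield $15$ distinct secants and the number of passants is $21-15=6$. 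For the concurrency claim, take any point $R$ off the hyperoval; the five lines through $R$ partition the six arc points, each line being a secant or a passant, so the arc points split into three pairs. Thus $R$ lies on exactly three secants and $5-3=2$ passants. Since two passants (containing no arc point) meet only at an external point, every point lies on at most two passants, and no three passants are concurrent.

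Next I would show every complete arc is a $6$-arc by verifying that any $k$-arc $K$ with $k\le 5$ extends. A point $Q\notin K$ may be adjoined exactly when it lies on no secant of $K$, so it suffices to show the secants of $K$ do not cover all $21-k$ external points. The cases $k\le 3$ follow from a crude count, so the decisive case is $k=5$ (the case $k=4$ being analogous and easier). Here $K$ has $\binom{5}{2}=10$ secants, each carrying $q-1=3$ external points, giving $30$ incidences; writing $t_R$ for the number of secants through an external point $R$, we get $\sum_R t_R=30$. Two secants meet at an external point precisely when they share no arc point, so $\sum_R \binom{t_R}{2}=\binom{10}{2}-5\binom{4}{2}=45-30=15$. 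The essential extra constraint is that $t_R\le 2$: three secants through $R$ would pass through six distinct arc points, but $K$ has only five. Hence $\sum_R\binom{t_R}{2}$ counts exactly the points with $t_R=2$, forcing $15$ such points and, via $\sum_R t_R=30$, no points with $t_R=1$. Thus the secants cover exactly $15$ external points, and since $21-5=16$, precisely one external point survives and extends $K$ to a hyperoval.

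The main obstacle lies in the completeness direction, specifically the $k=5$ count. The naive union bound $\binom{5}{2}(q-1)=30$ far exceeds the $16$ available external points, so covered points cannot be controlled by an inequality alone; one needs the two exact identities for $\sum_R t_R$ and $\sum_R\binom{t_R}{2}$ together with the parity-type bound $t_R\le 2$ (coming from $2t_R\le 5$) to pin the distribution down to all $t_R=2$ and certify a unique surviving point. The hyperoval half, by contrast, is driven entirely by the no-tangent phenomenon, which simultaneously delivers the passant count $6$ and the $3$-secant/$2$-passant splitting at each external point that rules out concurrent passants.
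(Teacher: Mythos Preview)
Your argument is correct. The paper, however, does not prove this lemma at all: it is quoted verbatim as a known fact from Hirschfeld's book \cite[Section~14.3]{H2005}, with no accompanying justification. So there is nothing to compare at the level of technique---you have supplied a self-contained elementary proof where the paper simply cites the literature.

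A brief check of the content: the hyperoval half is clean and standard (the no-tangent observation, the count $21-\binom{6}{2}=6$ passants, and the $3$-secant/$2$-passant split at each external point ruling out three concurrent passants are all correct). In the completeness half your $k=5$ computation is the heart of the matter and is handled well: the key ingredients are the identity $\sum_R t_R=30$, the pair count $\sum_R\binom{t_R}{2}=\binom{10}{2}-5\binom{4}{2}=15$, and the bound $t_R\le 2$ (since distinct secants through $R$ meet $K$ in disjoint pairs, three of them would require six arc points). These force exactly $15$ covered external points, leaving one survivor. The $k=4$ case, which you defer as ``analogous and easier,'' does require the same double-count rather than just a crude union bound (six secants cover at most $18$ external points and there are only $17$), but the identical method yields $3$ points with $t_R=2$, $12$ with $t_R=1$, hence $15$ covered and two survivors---so the deferral is justified.
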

\begin{lemma}\label{pglemm4}\rm{\cite[Section 14.4]{H2005}}
The complete arc in $PG_{2}(5)$ must be a $6$-arc, which is an oval.
For any oval of $PG_{2}(5)$, there are $10$ passants and no four of them are concurrent.
\end{lemma}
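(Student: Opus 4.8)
The plan is to establish the two assertions by elementary incidence counting in $PG_2(5)$, which has $q^2+q+1=31$ points and $31$ lines, with $q+1=6$ points on every line and $6$ lines through every point. By Lemma~\ref{pglemm2} an arc has at most $q+1=6$ points for odd $q$, and a $6$-arc is an oval by definition; hence for the first assertion it suffices to show that \emph{every} $k$-arc with $k\le 5$ extends to a $(k+1)$-arc, so that no such arc is complete. The basic tool is the line-type count at a point $Q$ off a $k$-arc $K$: each of the $k$ arc points lies on exactly one of the six lines through $Q$, so if $t,s,p$ count the tangents, secants, passants through $Q$ then
\[ t+2s=k, \qquad t+s+p=q+1=6. \]
Adding a point $Q\notin K$ keeps the arc property precisely when $s=0$, so the goal is to exhibit an external point lying on no secant.

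First I would dispose of the easy cases $k\le 4$ by a first-moment bound: the $\binom{k}{2}$ secants cover at most $\binom{k}{2}(q-1)$ of the $31-k$ external points, and since $\binom{4}{2}\cdot 4=24<27=31-4$ (with still more slack when $k\le 3$), some external point avoids every secant and the arc extends. The only substantial case is $k=5$, where this bound fails ($10\cdot 4=40>26$); here I would use a second-moment argument. Writing $n_s$ for the number of external points on exactly $s$ secants, and noting that $t=5-2s\ge 0$ forces $s\in\{0,1,2\}$, I count external points, incidences, and secant-pairs:
\[ n_0+n_1+n_2=26, \qquad n_1+2n_2=\binom{5}{2}(q-1)=40, \qquad n_2=\binom{10}{2}-5\binom{4}{2}=45-30=15. \]
The last equation holds because two secants meet in a single point, a pair meets inside $K$ exactly when the two secants share one of the $5$ arc points (each carrying $k-1=4$ secants, hence $\binom{4}{2}$ pairs), and for $k=5$ only $s=2$ contributes to $\sum_s\binom{s}{2}n_s$. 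Solving gives $n_1=10$ and $n_0=1$: there is exactly one external point on no secant, so every $5$-arc extends uniquely to a $6$-arc. This proves that the complete arcs of $PG_2(5)$ are precisely the $6$-arcs, i.e. the ovals.

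For the second assertion, fix an oval (a $6$-arc) $K$. Since $k=q+1$, each arc point lies on $k-1=5$ secants and therefore on exactly one tangent, giving $6$ tangents and $\binom{6}{2}=15$ secants; the remaining $31-15-6=10$ lines are the passants. For the concurrency claim, take any point $R$. If $R\in K$ then every line through $R$ meets $K$, so no passant passes through $R$. If $R\notin K$, the two identities above with $k=6$ give $p=s$ and $t=6-2s\ge 0$, whence $p=s\le 3$. Thus no point lies on four or more passants, that is, no four passants are concurrent.

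The main obstacle is the extension of $5$-arcs: the first-moment count is insufficient, and one must bring in the second moment (the number of secant-pairs meeting off $K$) to pin down $n_0=1$ exactly. Everything else follows from the single identity $t+2s=k$ together with straightforward first-moment incidence counts, and in particular the passant statements need only the inequality $t\ge 0$.
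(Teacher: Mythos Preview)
Your proof is correct. The paper does not actually prove this lemma at all: it is stated as a citation to Hirschfeld's monograph \cite[Section~14.4]{H2005} and used as a black box. Your argument, by contrast, gives a self-contained derivation via elementary incidence counting. The first-moment bound for $k\le 4$ and the second-moment system for $k=5$ are standard and executed cleanly; the computation $n_0=1$ (so every $5$-arc in $PG_2(5)$ has a unique extension to an oval) is exactly the classical fact that an oval in $PG_2(q)$ for $q$ odd has a unique \emph{internal nucleus} with respect to any of its $5$-point subarcs. For the passant count and the ``no four concurrent'' claim, your use of $t+2s=6$, $t+s+p=6$ to get $p=s\le 3$ is the clean way to do it, and matches what one extracts from Hirschfeld's tables. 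The trade-off is clear: the paper's citation is shorter and appropriate for a result that is folklore in finite geometry, while your argument is independent of the reference and shows exactly why the bound $p\le 3$ holds (namely the nonnegativity of $t$), which is precisely what the application in Section~\ref{secssr} needs.
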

\begin{lemma}\label{pglemm7}\rm{\cite[Section 8.1, Section 9.1, Section 9.3]{H2005}}
The size of a complete arc in $PG_{2}(7)$ is $6$ or $8$.
For any $8$-arc of $PG_{2}(7)$, there are $21$ passants and no five of them are concurrent.
\end{lemma}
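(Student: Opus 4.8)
The plan is to treat a complete arc $K$ of size $k$ in $PG_{2}(7)$ by the standard line‑type bookkeeping and to decide the admissible values of $k$ one at a time. Recall that $PG_{2}(7)$ has $57$ points and $57$ lines, each line carrying $8$ points. For a $k$-arc every line is a passant, a tangent or a secant (no three points are collinear); through each arc point run exactly $k-1$ secants and $9-k$ tangents, so $K$ has $\binom{k}{2}$ secants and $k(9-k)$ tangents. At any point $P\notin K$, writing $a,b,c$ for the numbers of passants, tangents and secants through $P$, the relations $a+b+c=8$ and $b+2c=k$ hold; in particular $c\le\lfloor k/2\rfloor$, and $P$ is \emph{addable} (that is, $K\cup\{P\}$ is again an arc) exactly when $c=0$. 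Lemma~\ref{pglemm2} gives $k\le 8$ since $7$ is odd, so it remains to locate the complete sizes inside $\{4,5,6,7,8\}$.

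First I would dispose of the small cases $k\le 5$ by a covering count: completeness forces every one of the $57-k$ external points to lie on a secant. For $k=5$ each external point carries $c\le 2$ secants, so no external point is common to three secants. Among the $\binom{10}{2}=45$ pairs of secants, the $\binom{4}{2}=6$ pairs through each of the $5$ arc points account for $30$ pairs meeting on $K$, leaving $15$ pairs meeting externally; hence exactly $15$ external points lie on two secants. Since the secants have $10\cdot 6=60$ incidences with external points, the remaining $60-30=30$ incidences sit on distinct single‑secant points, so at most $15+30=45<52$ external points are covered, and the $\ge 7$ uncovered points are addable — contradicting completeness. The case $k=4$ is cruder still ($6$ secants cover at most $36<53$ points). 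Thus $k\ge 6$.

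The main obstacle will be excluding $k=7$, where pure incidence counting does not suffice. Carrying the bookkeeping to second order — pairing up secants, pairing up tangents, and using $c_{P}\le 3$ — pins down the secant‑multiplicity distribution over the $50$ external points to $n_{0}=29-n_{3}$, $n_{1}=3n_{3}-84$, $n_{2}=105-3n_{3}$ with $n_{3}\in\{28,29\}$, where $n_{j}=\#\{P:\ c_{P}=j\}$; completeness forces $n_{0}=0$, i.e.\ the single surviving configuration $n_{3}=29$. To kill this configuration one must use the geometry of odd $q$, not merely incidences. The route I would take is to prove that every $7$-arc of $PG_{2}(7)$ lies on a conic, whence it extends to that conic's eighth point and is therefore not complete. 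The tool is Segre's lemma of tangents: fixing three arc points as the fundamental triangle and forming the product of the tangent parameters at the three vertices yields a rigid multiplicative relation in $\mathbb{F}_{7}^{\times}$, which for a $7$-arc (with $9-k=2$ tangents per point) is strong enough to force all arc points onto a single conic. This is the delicate step, and it is precisely the content drawn from \cite[Sections 8.1, 9.1, 9.3]{H2005}. Combined with the existence of a complete $6$-arc (exhibited by an explicit example) and of conics (the $8$-arcs, which are of maximum size and hence automatically complete), this shows that the complete sizes are exactly $6$ and $8$.

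Finally I would read off the oval statement. For an $8$-arc $K$ there are $\binom{8}{2}=28$ secants and $8\cdot 1=8$ tangents, so the number of passants is $57-28-8=21$. For the concurrency claim, observe that a passant contains no point of $K$, so any point lying on two or more passants is off $K$; at such a point the relations $a+b+c=8$ and $b+2c=8$ give $a=c$, and hence $a=c\le 4$. Therefore at most four passants pass through any single point, so no five of the $21$ passants are concurrent, as claimed.
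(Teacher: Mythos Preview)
The paper does not prove this lemma at all; it is quoted as a citation from Hirschfeld's monograph, with no argument supplied. Your write-up therefore provides considerably more than the paper does, and the elementary portions are correct: the line-type bookkeeping, the secant-covering counts ruling out $k\le 5$, the passant count $57-28-8=21$ for an $8$-arc, and the concurrency bound (at an off-arc point $a+b+c=8$ and $b+2c=8$ force $a=c\le 4$, so no five passants are concurrent) are all clean and complete.

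The one step that is not self-contained is the exclusion of $k=7$. Your second-order count correctly shows that incidence arithmetic alone leaves the single configuration $n_{3}=29$ alive, and you then propose to kill it by showing that every $7$-arc of $PG_{2}(7)$ lies on a conic via Segre's lemma of tangents. This is the right route and is indeed what the cited sections of Hirschfeld do, but do not oversell its directness: for a $q$-arc with two tangents per point the tangent-product relation gives $(-1)^{2}=1$ rather than $-1$, and turning that into ``all seven points lie on one conic'' needs Segre's further analysis of the algebraic envelope of the tangent lines, not just the bare lemma. Since you explicitly flag this as the delicate step drawn from the reference, the proposal is sound as a sketch; just be aware that this is where the genuine work hides.
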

\begin{lemma}\label{pglemm8}\rm{\cite[Section 8.1, Section 9.1, Section 9.3]{H2005}}
The size of a complete arc in $PG_{2}(8)$ is $6$ or $10$.
For any $10$-arc of $PG_{2}(8)$, there are $28$ passants and no five of them are concurrent.
\end{lemma}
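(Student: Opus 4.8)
The plan is to realize the quantity inside the maximum as the edge density of an explicit $PG_2(q)$-free $(q+1)$-uniform hypergraph, so that any admissible choice of parameters yields a lower bound for $\pi(PG_2(q))$ and the supremum over parameters gives the stated maximum. Writing $N=M(q)-1$, I would partition a vertex set of size $n$ into an ``$A$-part'' of size $\alpha n$, a ``$B$-part'' of size $\beta n$, and $N$ pairwise disjoint ``cap parts'' $C_1,\dots,C_N$ each of size $\gamma n$, where $\alpha+\beta+N\gamma=1$. A $(q+1)$-set is declared an edge precisely when it meets $A$ in $i$ vertices, meets $B$ in $j$ vertices, and meets $q+1-i-j$ of the cap parts in exactly one vertex each (and the remaining cap parts not at all), subject to $1\le i\le q$, $0\le j\le 2$, and $0\le q+1-i-j\le N$. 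Each such edge type contributes a multinomial weight $\tfrac{(q+1)!}{i!\,j!}$ (the caps each receive a single vertex, contributing $1!$), and there are $\binom{N}{q+1-i-j}$ ways to choose which cap parts are used; summing $\tfrac{(q+1)!}{i!\,j!}\binom{N}{q+1-i-j}\alpha^i\beta^j\gamma^{q+1-i-j}$ over the admissible $(i,j)$ gives, as $n\to\infty$, exactly the displayed double sum as the limiting edge density.

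The heart of the argument is that this hypergraph contains no copy of $PG_2(q)$. Suppose it did; composing the embedding with the part-assignment produces a coloring $\psi$ of the points of $PG_2(q)$ by $\{A,B,C_1,\dots,C_N\}$ under which every line of $PG_2(q)$ has an admissible profile. From $j\le 2$ on every line, the set $S_B:=\psi^{-1}(B)$ contains no three collinear points, so $S_B$ is an arc. From the ``at most one vertex per cap part'' rule, each $\psi^{-1}(C_t)$ is a single point, since two points in one $C_t$ determine a line of $PG_2(q)$ that would then use two vertices of $C_t$; hence $Q':=\bigcup_{t}\psi^{-1}(C_t)$ satisfies $|Q'|\le N=M(q)-1$. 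Finally $i\le q$ forbids any line from lying entirely in $A$, so every line meets $S_B\cup Q'$; equivalently $Q'$ meets every passant of the arc $S_B$, whence $|Q'|\ge m(S_B)$. Extending $S_B$ to a complete arc $S_B^{*}$ only removes passants, so a cover of the passants of $S_B$ covers those of $S_B^{*}$ and therefore $m(S_B)\ge m(S_B^{*})\ge M(q)$ by the definition of $M(q)$. This contradicts $|Q'|\le M(q)-1$, proving freeness. I note that the summation index $i$ runs only from $1$ to $q$ for intrinsic reasons: the term $i=q+1$ is exactly the all-$A$ profile that must be excluded to keep the construction free, while for $i=0$ admissibility would force $q+1-j\le N=M(q)-1$ with $j\le 2$, i.e.\ $M(q)\ge q+1$, impossible since $M(q)\le q-1$, so the $i=0$ terms vanish.

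For the five explicit estimates the remaining tasks are to evaluate $M(q)$ and then to maximize a concrete polynomial. Using Lemmas~\ref{pglemm10}, \ref{pglemm3}, \ref{pglemm4}, \ref{pglemm7} and \ref{pglemm8}, for each complete arc $K$ I would read off the number $p$ of passants and an upper bound $c$ on the number of passants through a single point; since one point covers at most $c$ passants we get $m(K)\ge\lceil p/c\rceil$, and in every listed case this evaluates to $q-1$. Combined with the universal bound $M(q)\le q-1$, this yields $M(q)=q-1$, i.e.\ $M(3)=2$, $M(4)=3$, $M(5)=4$, $M(7)=6$, $M(8)=7$. Substituting each value, the double sum becomes an explicit polynomial in $(\alpha,\beta,\gamma)$ to be maximized on the simplex $\alpha+\beta+(M(q)-1)\gamma=1$; eliminating one variable and solving the stationarity conditions numerically produces the decimals in (1)--(5).

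I expect the main obstacle to be the exact determination of $M(q)$ when $PG_2(q)$ admits complete arcs of more than one size, namely $q=7$ and $q=8$, where both a $6$-arc and a larger arc occur. The concurrency bound must be applied to \emph{every} complete arc to guarantee that the minimum $M(q)=\min_K m(K)$ is not lowered by the smaller arcs, and for the arc attaining the minimum one must confirm that the bound $\lceil p/c\rceil$ is tight by exhibiting an actual passant-cover of that size. By contrast, the freeness argument is short once the three profile constraints are correctly identified with ``arc'', ``cap'', and ``no monochromatic-$A$ line''.
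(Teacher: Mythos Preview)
Your proposal does not address the stated lemma at all. Lemma~\ref{pglemm8} is a result in finite geometry, cited without proof from Hirschfeld~\cite{H2005}: it asserts that the only complete arc sizes in $PG_2(8)$ are $6$ and $10$, that a $10$-arc has exactly $28$ passants, and that at most four passants of a $10$-arc are concurrent. The paper does not prove this; it simply invokes the reference. What you have written is instead a proof of Theorem~\ref{pgthm7}, the Tur\'an density lower bound, which \emph{uses} Lemma~\ref{pglemm8} as an input. As a proof of the lemma itself, your proposal is vacuous.

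If one reads your write-up as an attempt at Theorem~\ref{pgthm7}, it is essentially the paper's own argument in Section~\ref{secssr}: the same partitioned construction, the same freeness proof via ``$A$ is a blocking set, $B$ is an arc, each $C_t$ a singleton'', and the same edge count. The one place where you fall short of the paper is precisely the obstacle you flag at the end: for $q=7$ and $q=8$ there are complete $6$-arcs in addition to the ovals/hyperovals, and the passant data in Lemmas~\ref{pglemm7} and~\ref{pglemm8} say nothing about those $6$-arcs. Your bound $m(K)\ge\lceil p/c\rceil$ therefore only certifies $m(K)\ge q-1$ for the large arcs, not for the $6$-arcs, so the minimum $M(q)=\min_K m(K)$ could in principle drop. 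The paper closes this gap with separate case-by-case computations (Lemmas~\ref{pglemm6} and~\ref{pglemm9}, proved in Appendices~A and~B) showing directly that no $5$-point (resp.\ $6$-point) set can meet every passant of a complete $6$-arc in $PG_2(7)$ (resp.\ $PG_2(8)$). Without that step your claimed values $M(7)=6$ and $M(8)=7$ are not justified.
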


\section{General result}\label{section3}
Our first construction depends on the upper bound of blocking sets in $PG_{m}(q)$.
\begin{proof}[Proof of Theorem~\ref{pgthm1}]
We partition a set $V$ of $n$ vertices into parts $X,\ Y_{1},\ Y_{2},\dots, Y_{\sum_{i=1}^{m}q^{i}-k}$ so that $||X|-\beta n|\le1$ and $||Y_{i}|-\alpha n|\le1$, where $\alpha,\beta$ are positive constants with $\beta+(\sum_{i=1}^{m}q^{i}-k)\alpha=1$. Define a $(q+1)$-uniform hypergraph $\mathcal{H}_{n}$ on vertex set $V$ such that the edges of $\mathcal{H}_{n}$ are all $(q+1)$ tuples $e$ of $V$ satisfying $1\le|e\cap X|\le q$ and $|e\cap Y_{i}|\le1$ for $i=1,\dots,\sum_{i=1}^{m}q^{i}-k$.

Assume that there exists a $PG_{m}(q)$ in $\mathcal{H}_{n}$, and let $A=X\cap PG_{m}(q)$ and $B_{i}=Y_{i}\cap PG_{m}(q)$ ($i=1,\dots,\sum_{i=1}^{m}q^{i}-k$). Then $A$ is a blocking set, hence $|A|\le k$. Since for any two vertices of $PG_{m}(q)$, there is an edge through them, then $|B_{i}|\le1$ for $i=1,\dots,\sum_{i=1}^{m}q^{i}-k$. Hence \[\sum_{i=0}^{m}q^{i}=|PG_{m}(q)|=|A|+\sum_{i=1}^{\sum_{i=1}^{m}q^{i}-k}|B_{i}|\le \sum_{i=1}^{m}q^{i},\] which is a contradiction.

Now we count the number of edges in $\mathcal{H}_{n}$.
\begin{align*}
e(\mathcal{H}_{n})&\ge\sum_{i=1}^{q}\binom{\lfloor\beta n\rfloor}{i}\binom{\sum_{i=1}^{m}q^{i}-k}{q-i+1}(\lfloor\alpha n\rfloor)^{q-i+1}\\
&=\sum_{i=1}^{q}\binom{\sum_{i=1}^{m}q^{i}-k}{q-i+1}\frac{\beta^{i}}{i!}\alpha^{q-i+1}n^{q+1}+o(n^{q+1}).
\end{align*}
Therefore, we have a lower bound
\begin{align*}
\pi(PG_{m}(q))&\ge\lim_{n\rightarrow\infty}\binom{n}{q+1}^{-1}e(\mathcal{H}_{n})\\
&=(q+1)!\sum_{i=1}^{q}\binom{\sum_{i=1}^{m}q^{i}-k}{q-i+1}\frac{\beta^{i}}{i!}\alpha^{q-i+1}\\
              &=(q+1)!\sum_{i=1}^{q}\binom{\sum_{i=1}^{m}q^{i}-k}{q-i+1}\frac{(1-(\sum_{i=1}^{m}q^{i}-k)\alpha)^{i}}{i!}\alpha^{q-i+1}.
\end{align*}
\end{proof}
\section{Dimension two: Proof of Theorem~\ref{pgthm7}}\label{secssr}
In this section, by the behavior of passants of the complete arcs in $PG_{2}(q)$, we can give better lower bounds for $\pi(PG_{2}(q))$ when $q=3,4,5,7$ and $8$.

We partition a set $V$ of $n$ vertices into parts $X, Y, Z_{1},Z_{2},\dots,Z_{M(q)-1}$, so that $||X|-\alpha n|\le1$, $||Y|-\beta n|\le1$ and $||Z_{i}|-\gamma n|\le1$ ($i=1,2,\dots,M(q)-1$), where $\alpha,\beta,\gamma\ge0$ and $\alpha+\beta+(M(q)-1)\gamma=1$. Define a $(q+1)$-uniform hypergraph $\mathcal{H}_{n}$ on vertex set $V$ such that the edges of $\mathcal{H}_{n}$ are all $(q+1)$ tuples $e$ of $V$ satisfying $1\le|e\cap X|\le q$, $|e\cap Y|\le2$ and $|e\cap Z_{i}|\le1$.

Assume that there exists a $PG_{2}(q)$ in $\mathcal{H}_{n}$, and let $A=X\cap PG_{2}(q)$, $B=Y\cap PG_{2}(q)$ and $C_{i}=Z_{i}\cap PG_{2}(q)$. Note that $1\le|e\cap A|\le q$ for any edge $e\in PG_{2}(q)$, then $A$ is a blocking set. Since no three vertices of $B$ are collinear, then $B$ is an arc. Since for any two vertices of $PG_{2}(q)$, there is an edge through them, then $|C_{i}|\le1$. Note that $M(q)$ is the minimal number $t$ such that for any complete arc $K$, there is a set $Q$ with $|Q|=t$ and the set $PG_{2}(q)\backslash(K\cup Q)$ contains no line.
Then $PG_{2}(q)\backslash (B\cup_{i=1}^{M(q)-1} C_{i})$ contains a line. Since $A= PG_{2}(q)\backslash (B\cup_{i=1}^{M(q)-1} C_{i})$, then we get a contradiction.

Now we count the number of edges in $\mathcal{H}_{n}$.
\begin{align*}
e(\mathcal{H}_{n})\ge\sum_{i=1}^{q}\sum_{j=\max\{0,q+2-M(q)-i\}}^{\min\{2,q+1-i\}}\binom{M(q)-1}{q+1-i-j}\binom{\lfloor\alpha n\rfloor}{i}\binom{\lfloor\beta n\rfloor}{j}(\lfloor\gamma n\rfloor)^{q+1-i-j}.
\end{align*}
Therefore, we have a lower bound
\begin{align*}
\pi(PG_{2}(q))\ge&\max\Big\{(q+1)!\sum_{i=1}^{q}\sum_{j=\max\{0,q+2-M(q)-i\}}^{\min\{2,q+1-i\}}\frac{1}{i!j!}\binom{M(q)-1}{q+1-i-j}\alpha^{i}\beta^{j}\gamma^{q+1-i-j}:\\ &\alpha,\beta,\gamma\ge0,\alpha+\beta+(M(q)-1)\gamma=1\Big\}.
\end{align*}
In the following subsections, we give better lower bounds for the \turan density of $PG_{2}(3)$, $PG_{2}(4)$, $PG_{2}(5)$, $PG_{2}(7)$ and $PG_{2}(8)$.
\subsection{$PG_{2}(3)$}
By Lemma~\ref{pglemm10}, the complete arc in $PG_{2}(3)$ must be a $4$-arc. For any $4$-arc $K$ of $PG_{2}(3)$, there are $3$ passants and they are not concurrent, hence $M(3)=2$.
Therefore, we have a lower bound
\begin{align*}
\pi(PG_{2}(3))\ge4\alpha^{3}\beta+4\alpha^{3}\gamma+6\alpha^{2}\beta^{2}+12\alpha^{2}\beta\gamma+12\alpha\beta^{2}\gamma,
\end{align*}
where $\alpha,\beta,\gamma\ge0$ and $\alpha+\beta+\gamma=1$.
This lower bound is optimised by the following choice of parameters:
\begin{align*}
&\alpha\sim0.5948588940,\\
&\beta\sim0.3216013121,\\
&\gamma\sim0.0835397939.
\end{align*}
This gives the lower bound
\[\pi(PG_{2}(3))\ge0.7364719055\]
as required.

\subsection{$PG_{2}(4)$}
By Lemma~\ref{pglemm3}, the complete arc in $PG_{2}(4)$ must be a $6$-arc. For any $6$-arc $K$ of $PG_{2}(4)$, there are $6$ passants and no three of them are concurrent. Hence $M(4)=3$.
Therefore, we have a lower bound
\begin{align*}
\pi(PG_{2}(4)) \ge5\alpha^{4}\beta+10\alpha^{4}\gamma+10\alpha^{3}\beta^{2}+40\alpha^{3}\beta\gamma+20\alpha^{3}\gamma^{2}+60\alpha^{2}\beta^{2}\gamma+60\alpha^{2}\beta\gamma^{2}+60\alpha\beta^{2}\gamma^{2},
\end{align*}
where $\alpha,\beta,\gamma\ge0,\alpha+\beta+2\gamma=1$.
This lower bound is optimised by the following choice of parameters:
\begin{align*}
&\alpha\sim0.6566212797,\\
&\beta\sim0.2297814643,\\
&\gamma\sim0.1135972558.
\end{align*}
This gives the lower bound
\[\pi(PG_{2}(4))\ge0.7381611274\]
as required.

\subsection{$PG_{2}(5)$}
By Lemma~\ref{pglemm4}, the complete arc in $PG_{2}(5)$ must be a $6$-arc.
For any $6$-arc $K$ of $PG_{2}(5)$, there are $10$ passants and no four of them are concurrent. Hence $M(5)=4$.
Therefore, we have a lower bound
\begin{align*}
\pi(PG_{2}(5)) \ge&6\alpha^{5}\beta+18\alpha^{5}\gamma+15\alpha^{4}\beta^{2}+90\alpha^{4}\beta\gamma+90\alpha^{4}\gamma^{2}+180\alpha^{3}\beta^{2}\gamma+360\alpha^{3}\beta\gamma^{2}+120\alpha^{3}\gamma^{3}\\
&+540\alpha^{2}\beta^{2}\gamma^{2}+360\alpha^{2}\beta\gamma^{3}+360\alpha\beta^{2}\gamma^{3},
\end{align*}
where $\alpha,\beta,\gamma\ge0,\ \alpha+\beta+3\gamma=1$.
This lower bound is optimised by the following choice of parameters:
\begin{align*}
&\alpha\sim0.7000841083,\\
&\beta\sim0.1750121987,\\
&\gamma\sim0.0416345643.
\end{align*}
This gives the lower bound
\[\pi(PG_{2}(5))\ge0.7440388117\]
as required.

\subsection{$PG_{2}(7)$}
By Lemma~\ref{pglemm7}, the size of a complete arc in $PG_{2}(7)$ is $6$ or $8$.
For any $8$-arc of $PG_{2}(7)$, there are $21$ passants and no five of them are concurrent. For the complete $6$-arc of $PG_{2}(7)$, we have the following lemma.
\begin{lemma}\label{pglemm6}
For any complete $6$-arc $K$ of $PG_{2}(7)$, there do not exist five points $P_{1},P_{2},P_{3},P_{4},\\ P_{5}\in PG_{2}(7)$ such that $PG_{2}(7)\backslash(K\cup\{P_{1},P_{2},P_{3},P_{4},P_{5}\})$ contains no line.
\end{lemma}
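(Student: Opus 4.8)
The plan is to turn the statement into a covering problem and then close it with a small lemma about perfect matchings of a $6$-element set. First I would observe that $PG_{2}(7)\setminus(K\cup Q)$ is line-free precisely when $K\cup Q$ meets every line of $PG_{2}(7)$, and that the only lines not met by the arc $K$ are its passants; counting with $|PG_{2}(7)|=57$, lines of size $8$, $\binom{6}{2}=15$ secants and (since each point of $K$ lies on $5$ secants) $6\cdot 3=18$ tangents, $K$ has exactly $24$ passants. So the assertion becomes: no five points $P_{1},\dots,P_{5}$ can meet all $24$ passants of $K$. For an external point $P$ with $s$ secants, $t$ tangents, $p$ passants through it, $2s+t=6$ and $s+t+p=8$ give $p=s+2$, and $t\ge 0$ forces $s\le 3$; hence $p\le 5$, with $p=5$ iff $s=3$, in which case the three secants through $P$ partition the six points of $K$ into pairs, and conversely $P$ is then the unique common point of the three secants of that matching. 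I will call a point with $p=5$ a $5$-point, and note that distinct $5$-points have distinct associated matchings.

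Assume $P_{1},\dots,P_{5}$ meet every passant; we may take them distinct and off $K$ (the other cases are easier). Since $3\cdot 5+2\cdot 4=23<24$, at least four of them are $5$-points, say $P_{1},\dots,P_{4}$, with pairwise distinct matchings $M_{1},\dots,M_{4}$ of $K$. Because $P_{5}$ lies on at most $5$ passants, $P_{1},\dots,P_{4}$ must together meet at least $19$ passants; as they lie on $20$ passants counted with multiplicity, at most one passant passes through two of them and none through three, so at most one of the six lines $\overline{P_{i}P_{j}}$ ($1\le i<j\le 4$) is a passant. The decisive local fact is that a $5$-point lies on no tangent of $K$, so each line $\overline{P_{i}P_{j}}$ is either a secant or a passant, and it is a secant exactly when $M_{i}$ and $M_{j}$ share a pair of $K$ and a passant exactly when $M_{i}\cap M_{j}=\varnothing$. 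Hence at most one of the pairs $\{M_{i},M_{j}\}$ is disjoint, i.e.\ the graph $G$ on $\{M_{1},\dots,M_{4}\}$ joining two matchings when they share a pair of $K$ has at least $5$ edges.

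To reach a contradiction it remains to show $e(G)\le 4$. The engine is the observation that exactly three perfect matchings of a $6$-set contain any given pair; consequently any pairwise-intersecting family of perfect matchings of a $6$-set is a ``star'' through one fixed pair and has at most $3$ members, which rules out $G=K_{4}$. And if $G$ were $K_{4}$ minus an edge, its two triangles would be two such stars sharing the two vertices $M_{1},M_{2}$, forcing either $M_{1}=M_{2}$ or a fourth matching through a common pair, both impossible; so $e(G)\le 4$, the desired contradiction. I expect the only genuinely delicate step to be this final matching lemma — everything else is bookkeeping with the three line-types and the counting inequality. (A more computational alternative would instead fix the complete $6$-arc of $PG_{2}(7)$ up to collineation and verify the covering is impossible directly, but the argument above has the advantage of using nothing about the arc beyond its parameters.)
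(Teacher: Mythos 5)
Your proof is correct, and it takes a genuinely different route from the paper. The paper's own argument invokes the classification of complete $6$-arcs of $PG_2(7)$ (there are exactly two up to collineation), explicitly lists the $24$ passants and the six external points lying on five passants each, and verifies by computing the pairwise intersections of their passant pencils that any four of these points cover at most $19$ passants and any five at most $23<24$. Your argument replaces all of this coordinate work by parameter counting ($24$ passants; $p=s+2\le 5$ passants through an external point), the injection from $5$-points to concurrent perfect matchings of $K$, the multiplicity count forcing at least five of the six lines $\overline{P_iP_j}$ ($1\le i<j\le 4$) to be secants, and the star lemma for pairwise-intersecting perfect matchings of a $6$-set, which caps the number of secant connections among four distinct matchings at four. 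I checked the individual steps and they all hold: two distinct perfect matchings of a $6$-set share at most one pair; three pairwise-intersecting distinct ones share a common pair (so a pairwise-intersecting family has at most three members, killing $K_4$, and the two triangles of $K_4$ minus an edge force a contradiction); and the degenerate cases (a repeated point, or a point of $K$, which lies on no passant) leave at most four points covering at most $20<24$ passants. What you gain is independence from the classification of complete $6$-arcs and from any explicit computation, plus a slightly stronger conclusion (valid for every $6$-arc of $PG_2(7)$, complete or not). What the paper's method buys is that it is mechanical and transfers directly to the companion statement for $PG_2(8)$, whereas your numerology ($3\cdot 5+2\cdot 4<24$ and the matching structure of a $6$-set) is tuned to $q=7$, $k=6$ and would need to be reworked for other parameters.
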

The proof of this lemma is just a tedious calculation, which will be given in Appendix A.

Then we have $M(7)=6$.
Therefore, we have a lower bound
\begin{align*}
\pi(PG_{2}(7))\ge&20160\alpha\beta^{2}\gamma^{5}+50400\alpha^{2}\beta^{2}\gamma^{4}+20160\alpha^{2}\beta\gamma^{5}+33600\alpha^{3}\beta^{2}\gamma^{3}+33600\alpha^{3}\beta\gamma^{4}\\&
+6720\alpha^{3}\gamma^{5}+8400\alpha^{4}\beta^{2}\gamma^{2}+16800\alpha^{4}\beta\gamma^{3}+8400\alpha^{4}\gamma^{4}+840\alpha^{5}\beta^{2}\gamma+3360\alpha^{5}\beta\gamma^{2}\\
&+3360\alpha^{5}\gamma^{3}+28\alpha^{6}\beta^{2}+280\alpha^{6}\beta\gamma+560\alpha^{6}\gamma^{2}+8\alpha^{7}\beta+40\alpha^{7}\gamma,
\end{align*}
where $\alpha,\beta,\gamma\ge0,\alpha+\beta+5\gamma=1$.
This lower bound is optimised by the following choice of parameters:
\begin{align*}
&\alpha\sim0.7578927975,\\
&\beta\sim0.1142680556,\\
&\gamma\sim0.02556782938.
\end{align*}
This gives the lower bound
\[\pi(PG_{2}(7))\ge0.7583661147\]
as required.

\subsection{$PG_{2}(8)$}
By Lemma~\ref{pglemm8}, the size of a complete arc in $PG_{2}(8)$ is $6$ or $10$.
For any $10$-arc of $PG_{2}(8)$, there are $28$ passants and no five of them are concurrent. For the complete $6$-arc of $PG_{2}(8)$, we have the following lemma.
\begin{lemma}\label{pglemm9}
For any complete $6$-arc $K$ of $PG_{2}(8)$, there do not exist six points $P_{1},P_{2},P_{3},P_{4},\\ P_{5},P_{6}\in PG_{2}(8)$ such that $PG_{2}(8)\backslash(K\cup\{P_{1},P_{2},P_{3},P_{4},P_{5},P_{6}\})$ contains no line.
\end{lemma}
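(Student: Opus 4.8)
\textbf{Proof plan for Lemma~\ref{pglemm9}.}

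The plan is to mirror the structure of the proof of Lemma~\ref{pglemm6} (deferred to Appendix A), adapting it to $q=8$: the claim is that for every complete $6$-arc $K$ in $PG_2(8)$ and every choice of six points $P_1,\dots,P_6$, the set $PG_2(8)\setminus(K\cup\{P_1,\dots,P_6\})$ still contains a line. First I would recall the combinatorial data of a complete $6$-arc $K$ in $PG_2(8)$: since $K$ is not contained in a larger arc, and $q=8$ is even so a $6$-arc falls short of the $(q+2)=10$-arc bound, completeness forces every point off $K$ to lie on at least one secant of $K$. Counting secants: $K$ has $\binom{6}{2}=15$ secants and $6$ tangents (two per point? no — for even $q$ the tangents to a $6$-arc behave differently, so I would instead directly count passants), and the problem statement tells us there are $28$ passants. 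The key structural input is that these $28$ passant lines, together with the remaining lines, cover $PG_2(8)$ in a way that is hard to destroy with only $6$ deleted points.

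The heart of the argument is a covering/counting bound. A line of $PG_2(8)$ has $q+1=9$ points. The arc $K$ meets a passant in $0$ points, a tangent in $1$ point, and a secant in $2$ points. Removing $K$ from the plane and then removing six further points $P_1,\dots,P_6$, I want to exhibit a line $\ell$ with $\ell\cap(K\cup\{P_1,\dots,P_6\})=\emptyset$, i.e. a passant of $K$ that avoids all of $P_1,\dots,P_6$. Each point $P_i$ that does not lie on $K$ lies on some number $t_i$ of the $28$ passants; a point of $K$ lies on no passant, so WLOG all $P_i\notin K$. If I can show $\sum_{i=1}^{6} t_i < 28$, then by pigeonhole some passant avoids every $P_i$ and we are done. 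So the crux reduces to: \emph{no point of $PG_2(8)$ lies on too many passants of a complete $6$-arc}, and more precisely the six points with the largest passant-counts cannot jointly cover all $28$ passants. I would compute, for a point $P\notin K$, the number of passants through $P$: the $9$ lines through $P$ split into secants, tangents and passants; using that $P$ lies on at least one secant (completeness) and bounding the number of secants through $P$ by $\lfloor 6/2\rfloor=3$ (since the $6$ arc-points on secants through $P$ are distinct), a point lies on at most $9-1=8$ passants, but a sharper bound (accounting for tangents, of which there is at least one through a generic point) gives fewer; the delicate case is points lying on exactly one secant and no tangent, giving up to $7$ or $8$ passants, and I would need the finer incidence structure (possibly a short case analysis or a computer search over the projectively inequivalent complete $6$-arcs of $PG_2(8)$, of which there are few) to rule out six such points covering all $28$ passants.

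The main obstacle is exactly this last point: a pure averaging bound gives roughly $6 \cdot 8 = 48 > 28$, which is not enough, so the argument cannot be purely numerical and must use the geometry — specifically, that passants through a fixed point are forced into configurations (e.g. dual arcs, or restrictions coming from the fact that $K$ is complete and $q$ is even so the tangents to $K$ are concurrent or otherwise constrained) that prevent a small number of points from being incident to all of them. I expect to handle this by classifying the complete $6$-arcs of $PG_2(8)$ up to collineation (there is essentially one, related to a conic plus a point or to a known sporadic arc), fixing explicit coordinates over $\mathbb{F}_8$, enumerating the $28$ passant lines, and then verifying by exhaustive check over all $\binom{73}{6}$ — or, much more cheaply, over the at most $28$ "heavy" points and their passant-incidence vectors — that no six points form a covering set; as in Lemma~\ref{pglemm6}, this reduces to a finite, if tedious, calculation, which I would relegate to the appendix.
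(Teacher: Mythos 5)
Your overall strategy is the same as the paper's (Appendix~B): reduce the claim to showing that no six points cover all passants of the (essentially unique) complete $6$-arc $K$, fix explicit coordinates over $\mathbb{F}_8$, enumerate the passants, and concentrate the finite check on the points lying on the maximal number of passants. However, two of your key constants are wrong. First, the complete $6$-arc of $PG_2(8)$ has $73-\binom{6}{2}-6\cdot 4=34$ passants, not $28$; the figure $28$ in Lemma~\ref{pglemm8} refers to the $10$-arc (hyperoval), a different object. Second, your bound on passants through a point is incorrect and has the monotonicity reversed: for $P\notin K$ lying on $s$ secants and $t$ tangents of $K$ one has $2s+t=6$, hence the number of passants through $P$ is exactly $9-s-t=3+s$, which is at most $6$ (attained when $s=3$) and at least $4$ (since completeness forces $s\ge 1$). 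In particular a point on exactly one secant lies on exactly $4$ passants, not ``$7$ or $8$''; the dangerous points are those on three secants, and there are precisely seven of them. The true numerology is therefore $6\times 6=36$ against $34$, a deficit of only $2$, so the argument must exhibit at least three forced coincidences among the passant-pencils of any six chosen points.

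That last step is exactly the part you defer, and it is the entire content of the lemma: the paper lists the seven points $P_1,\dots,P_7$ on six passants each, writes down the sets $S(P_i)$ of passants through them, observes that four particular lines each lie in three of these sets, and concludes $|\bigcup_{i\in I}S(P_i)|\le 33<34$ for every $6$-subset $I$, together with $|\bigcup_{i\in I}S(P_i)|\le 28$ and $\le 23$ for $|I|=5,4$ so that mixtures involving points on at most $5$ passants are also covered (e.g.\ four heavy points plus two light ones give at most $23+2\cdot 5=33$). Without this explicit verification --- and with the incorrect counts above, which would make the intended pigeonhole inequality $\sum t_i<28$ unprovable since even a single heavy point already contributes $6$ and six of them can reach $36$ --- your proposal is a correct plan but not yet a proof.
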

The proof of this lemma is given in Appendix B.

Then we have $M(8)=7$.
Therefore, we have a lower bound
\begin{align*}
\pi(PG_{2}(8))\ge&181440\alpha\beta^2\gamma^6+544320\alpha^2\beta^2\gamma^5+181440\alpha^2\beta\gamma^6+453600\alpha^3\beta^2\gamma^4+362880\alpha^3\beta\gamma^5\\
&+60480\alpha^3\gamma^6+151200\alpha^4\beta^2\gamma^3+226800\alpha^4\beta\gamma^4+90720\alpha^4\gamma^5+22680\alpha^5\beta^2\gamma^2\\
&+60480\alpha^5\beta\gamma^3+45360\alpha^5\gamma^4+1512\alpha^6\beta^2\gamma+7560\alpha^6\beta\gamma^2+10080\alpha^6\gamma^3+36\alpha^7\beta^2\\
&+432\alpha^7\beta\gamma+1080\alpha^7\gamma^2+9\alpha^8\beta+54\alpha^8\gamma,
\end{align*}
where $\alpha,\beta,\gamma\ge0,\alpha+\beta+6\gamma=1$.
This lower bound is optimised by the following choice of parameters:
\begin{align*}
&\alpha\sim0.7782735564,\\
&\beta\sim0.0960589824,\\
&\gamma\sim0.0209445768.
\end{align*}
This gives the lower bound
\[\pi(PG_{2}(8))\ge0.7654160822\]
as required.

\section{Conclusion}\label{conclusion}
In this paper, we give some improvements on the lower bounds of the \turan density of $PG_{m}(q)$. We have the following remarks.
\begin{itemize}
  \item The construction for the lower bound in Theorem \ref{pgthm1} depends on the upper bound of blocking sets of $PG_{m}(q)$, any improvements may give better bounds for the \turan density of $PG_{m}(q)$. It is easy to see that the lower bound of $\pi(PG_{m}(q))$ from Theorem \ref{pgthm1} is less than $1-\frac{1}{2^{q}}$, while Theorem~\ref{pgthmbefore} gives $1-O(q^{2-m})$ for large $m$, hence it is worse than Theorem~\ref{pgthmbefore} when $m$ is relatively large. But when $q$ is relatively large, from Tables~\ref{comparisontable1} and \ref{comparisontable2}, it seems that our construction is better than Theorem~\ref{pgthmbefore}.
  \item Theorem~\ref{pgthm7} gives better lower bounds which are based on the value of $M(q)$. Recall that $M(q)$ is the minimal number $t$ such that for any complete arc $K$, there is a set $Q$ with $|Q|=t$ and the set $PG_{2}(q)\backslash(K\cup Q)$ contains no line. When $q\ge9$, there are many kinds of complete arcs in $PG_{2}(q)$ other than ovals or hyperovals (see \cite[Section 9.3]{H2005}), hence the determination of $M(q)$ becomes much more complicated. It would be interesting to determine $M(q)$ for general prime power $q$.
  \item A set $K$ of points of $PG_{m}(q)$ is said to be a set of class $[m_{1},\dots,m_{k}]_{r}$, $1\le r\le n-1$, if for every $r$-dimension subspace $\pi$, $|\pi\cap K|=m_{i}$ for some $1\le i\le k$. In this paper, our constructions mainly use the following kinds of sets:
      \begin{enumerate}
        \item a set of class $[0,1]_{1}$ in $PG_{m}(q)$;
        \item a set of class $[0,1,2]_{1}$ in $PG_{2}(q)$;
        \item a set of class $[1,2,\dots,q]_{1}$ in $PG_{m}(q)$.
      \end{enumerate}
      Applying other kinds of sets may give better constructions.
\end{itemize}
\section*{Appendix A: Proof of Lemma~\ref{pglemm6}}
In this section, we label each point of $PG_{2}(7)$ as $(x,y,z)$, i.e., the 1-dimensional subspace spanned by a nonzero vector $(x,y,z)$. Let $[x,y,z]$ denote the homogeneous coordinates of the line of $PG_{2}(7)$, i.e., the $2$-dimension subspace. Here, a point $(a,b,c)$ lies on the line $[ x,y,z]$ if $ax+by+cz=0$.

By \cite[Section 14.5]{H2005}, there are exactly 2 complete $6$-arcs up to isomorphisms:
\begin{align*}
&K_{1}=\{(-1,1,1),(1,1,1),(1,-1,1),(-1,-1,1),(0,2,1),(0,3,1)\},\\
&K_{2}=\{(-1,1,1),(1,1,1),(1,-1,1),(-1,-1,1),(0,2,1),(0,-3,1)\}.
\end{align*}

{\bf{Case 1: If the complete arc is $K_{1}$.}}

It is easy to compute that there are $24$ passants of $K_{1}$. By \cite[Section 9.1]{H2005}, there is no point of $PG_{2}(7)\backslash K_{1}$ through which more than $5$ passants of $K_{1}$ pass. Let
\begin{align*}
&P_{1}=(0,1,0),\ P_{2}=(1,2,6),\ P_{3}=(1,6,5),\\
&P_{4}=(1,5,1),\ P_{5}=(0,0,1),\ P_{6}=(1,1,2),
\end{align*}
and
\[S(P)=\{\ell: \ell\text{ is a passant of } K_{1}, P\in\ell\}.\]
Then we can compute to get that $P_{i}$ ($i=1,2,3,4,5,6$) are all the points of $PG_{2}(7)\backslash K_{1}$ such that there are exactly $5$ passants of $K_{1}$ passing through, and
\begin{align*}
&S(P_{1})=\{    [ 1, 0, 4 ], [ 0, 0, 1 ],[ 1, 0, 3 ],[ 1, 0, 2 ],[ 1, 0, 5 ]\},\\
&S(P_{2})=\{    [ 1, 2, 5 ],[ 1, 3, 0 ],[ 0, 1, 2 ],[ 1, 6, 6 ],[ 1, 1, 3 ]\},\\
&S(P_{3})=\{[ 1, 0, 4 ],[ 1, 5, 5 ],[ 0, 1, 3 ],[ 1, 6, 1 ],[ 1, 3, 6 ]\},\\
&S(P_{4})=\{[ 1, 1, 1 ],[ 0, 1, 2 ],[ 1, 6, 4 ],[ 1, 5, 2 ],[ 1, 4, 0 ]\},\\
&S(P_{5})=\{[ 1, 3, 0 ],[ 1, 2, 0 ],[ 0, 1, 0 ],[ 1, 5, 0 ],[ 1, 4, 0 ]\},\\
&S(P_{6})=\{[ 0, 1, 3 ],[ 1, 0, 3 ],[ 1, 1, 6 ],[ 1, 2, 2 ],[ 1, 4, 1 ]\}.
\end{align*}

Since
\begin{align*}
&S(P_{1})\cap S(P_{3})=\{[1,0,4]\},\ S(P_{1})\cap S(P_{6})=\{[1,0,3]\},\ S(P_{2})\cap S(P_{4})=\{[0,1,2]\},\\
&S(P_{2})\cap S(P_{5})=\{[1,3,0]\},\ S(P_{3})\cap S(P_{6})=\{[0,1,3]\},\ S(P_{4})\cap S(P_{5})=\{[1,4,0]\},
\end{align*}
then for any $I\subset[1,6]$ with $|I|=4$, we have $|\cup_{i\in I}S(P_{i})|\le 19$; and for any $I\subset[1,6]$ with $|I|=5$, we have $|\cup_{i\in I}S(P_{i})|\le 23$. Hence there do not exist five points $P_{1},P_{2},P_{3},P_{4}, P_{5}\in PG_{2}(7)$ such that $PG_{2}(7)\backslash(K_{1}\cup\{P_{1},P_{2},P_{3},P_{4},P_{5}\})$ contains no line.

{\bf{Case 2: If the complete arc is $K_{2}$.}}

It is easy to compute that there are $24$ passants of $K_{2}$. By \cite[Section 9.1]{H2005}, there is no point of $PG_{2}(7)\backslash K_{2}$ through which more than $5$ passants of $K_{2}$ pass. Let
\begin{align*}
&P_{1}=(1,3,3),\ P_{2}=(0,1,0),\ P_{3}=(1,2,5),\\
&P_{4}=(1,5,2),\ P_{5}=(0,0,1),\ P_{6}=(1,4,4),
\end{align*}
and
\[S(P)=\{\ell: \ell\text{ is a passant of } K_{2}, P\in\ell\}.\]
Then we can compute to get that $P_{i}$ ($i=1,2,3,4,5,6$) are all the points of $PG_{2}(7)\backslash K_{2}$ such that there are exactly $5$ passants of $K_{2}$ passing through, and
\begin{align*}
&S(P_{1})=\{[ 1, 1, 1 ],[ 1, 6, 3 ],[ 1, 2, 0 ],[ 1, 0, 2 ],[ 1, 3, 6 ]\},\\
&S(P_{2})=\{ [ 1, 0, 4 ],[ 0, 0, 1 ],[ 1, 0, 3 ],[ 1, 0, 2 ],[ 1, 0, 5 ]\},\\
&S(P_{3})=\{[ 1, 0, 4 ],[ 1, 3, 0 ],[ 1, 6, 3 ],[ 1, 5, 2 ],[ 1, 4, 1 ]\},\\
&S(P_{4})=\{ [ 1, 2, 5 ],[ 1, 0, 3 ],[ 1, 1, 4 ],[ 1, 4, 0 ],[ 1, 3, 6 ]\},\\
&S(P_{5})=\{[ 1, 3, 0 ],[ 1, 2, 0 ],[ 0, 1, 0 ],[ 1, 5, 0 ],[ 1, 4, 0 ]\},\\
&S(P_{6})=\{[ 1, 6, 6 ],[ 1, 5, 0 ],[ 1, 1, 4 ],[ 1, 4, 1 ],[ 1, 0, 5 ]\}.
\end{align*}

Since
\begin{align*}
&S(P_{1})\cap S(P_{2})=\{[1,0,2]\},\ S(P_{1})\cap S(P_{3})=\{[1,6,3]\},\ S(P_{1})\cap S(P_{4})=\{[1,3,6]\},\\
&S(P_{1})\cap S(P_{5})=\{[1,2,0]\},\ S(P_{2})\cap S(P_{3})=\{[1,0,4]\},\ S(P_{2})\cap S(P_{4})=\{[1,0,3]\},\\
&S(P_{2})\cap S(P_{6})=\{[1,0,5]\},\ S(P_{3})\cap S(P_{5})=\{[1,3,0]\},\ S(P_{3})\cap S(P_{6})=\{[1,4,1]\},\\
&S(P_{4})\cap S(P_{5})=\{[1,4,0]\},\ S(P_{4})\cap S(P_{6})=\{[1,1,4]\},\ S(P_{5})\cap S(P_{6})=\{[1,5,0]\},
\end{align*}
then for any $I\subset[1,6]$ with $|I|=4$, we have $|\cup_{i\in I}S(P_{i})|\le 19$; and for any $I\subset[1,6]$ with $|I|=5$, we have $|\cup_{i\in I}S(P_{i})|\le 23$. Hence there do not exist five points $P_{1},P_{2},P_{3},P_{4}, P_{5}\in PG_{2}(7)$ such that $PG_{2}(7)\backslash(K_{2}\cup\{P_{1},P_{2},P_{3},P_{4},P_{5}\})$ contains no line.

\section*{Appendix B: Proof of Lemma~\ref{pglemm9}}
In this section, we label each point of $PG_{2}(8)$ as $(x,y,z)$, i.e., the 1-dimensional subspace spanned by a nonzero vector $(x,y,z)$. Let $[x,y,z]$ denote the homogeneous coordinates of the line of $PG_{2}(8)$, i.e., the $2$-dimension subspace. Here, a point $(a,b,c)$ lies on the line $[ x,y,z]$ if $ax+by+cz=0$.

Let $\omega$ be the primitive element of $\mathbb{F}_{8}$ with $\omega^3+\omega^{2}+1=0$.
By \cite[Section 14.6]{H2005}, there is exactly 1 complete $6$-arc up to isomorphisms: \[K=\{(1,0,0),(0,1,0),(0,0,1),(1,1,1),(\omega^3,\omega^2,1),(\omega^2,\omega^3,1)\}.\]

It is easy to compute that there are $34$ passants of $K$. By \cite[Section 9.1]{H2005}, there is no point of $PG_{2}(8)\backslash K$ through which more than $6$ passants of $K$ pass. Let
\begin{align*}
&P_{1}=(1,\omega^6,0),\ P_{2}=(1,1,0),\ P_{3}=(1,1,\omega^5),\ P_{4}=(1,1,\omega^4),\\
&P_{5}=(1,\omega,0),\ P_{6}=(1,0,1),\ P_{7}=(0,1,1),
\end{align*}
and
\[S(P)=\{\ell: \ell\text{ is a passant of } K, P\in\ell\}.\]
Then we can compute to get that $P_{i}$ ($i=1,2,3,4,5,6,7$) are all the points of $PG_{2}(8)\backslash K$ such that there are exactly $6$ passants of $K$ passing through, and
\begin{align*}
&S(P_{1})=\{    [ 1, \omega, \omega^4 ],
    [ 1, \omega, \omega^3 ],
    [ 1, \omega, \omega^2 ],
    [ 1, \omega, \omega ],
    [ 1, \omega, 1 ],
    [ 1, \omega, \omega^6 ]\},\\
&S(P_{2})=\{[ 1, 1, \omega ],
    [ 1, 1, \omega^6 ],
    [ 1, 1, \omega^5 ],
    [ 1, 1, \omega^4 ],
    [ 1, 1, \omega^3 ],
    [ 1, 1, \omega^2 ]\},\\
&S(P_{3})=\{[ 1, \omega^5, \omega^3 ],
    [ 1, \omega^3, \omega^4 ],
    [ 1, \omega, 1 ],
    [ 1, \omega^6, \omega^6 ],
    [ 1, \omega^4, \omega ],
    [ 1, \omega^2, \omega^5 ]\},\\
&S(P_{4})=\{[ 1, \omega^6, 1 ],
    [ 1, \omega^3, \omega^5 ],
    [ 1, \omega, \omega ],
    [ 1, \omega^4, \omega^2 ],
    [ 1, \omega^5, \omega^4 ],
    [ 1, \omega^2, \omega^6 ]\},\\
&S(P_{5})=\{ [ 1, \omega^6, 1 ],
    [ 1, \omega^6, \omega^3 ],
    [ 1, \omega^6, \omega^2 ],
    [ 1, \omega^6, \omega^5 ],
    [ 1, \omega^6, \omega^6 ],
    [ 1, \omega^6, \omega]\},\\
&S(P_{6})=\{ [ 1, \omega^6, 1 ],
    [ 1, \omega^2, 1 ],
    [ 1, \omega^5, 1 ],
    [ 1, \omega, 1 ],
    [ 1, \omega^3, 1 ],
    [ 1, \omega^4, 1 ]\},\\
&S(P_{7})=\{ [ 1, \omega^2, \omega^2 ],
    [ 1, \omega^4, \omega^4 ],
    [ 1, \omega, \omega ],
    [ 1, \omega^6, \omega^6 ],
    [ 1, \omega^5, \omega^5 ],
    [ 1, \omega^3, \omega^3 ]\}.
\end{align*}

Since
\begin{align*}
&S(P_{1})\cap S(P_{3})=S(P_{1})\cap S(P_{6})=S(P_{3})\cap S(P_{6})=\{[1,\omega,1]\},\\
&S(P_{1})\cap S(P_{4})=S(P_{1})\cap S(P_{7})=S(P_{4})\cap S(P_{7})=\{[1,\omega,\omega]\},\\
&S(P_{3})\cap S(P_{5})=S(P_{3})\cap S(P_{7})=S(P_{5})\cap S(P_{7})=\{[1,\omega^6,\omega^6]\},\\
&S(P_{4})\cap S(P_{5})=S(P_{4})\cap S(P_{6})=S(P_{5})\cap S(P_{6})=\{[1,\omega^6,1]\},
\end{align*}
then for any $I\subset[1,7]$ with $|I|=4$, we have $|\cup_{i\in I}S(P_{i})|\le 23$; for any $I\subset[1,7]$ with $|I|=5$, we have $|\cup_{i\in I}S(P_{i})|\le 28$; and for any $I\subset[1,7]$ with $|I|=6$, we have $|\cup_{i\in I}S(P_{i})|\le 33$. Hence there do not exist six points $P_{1},P_{2},P_{3},P_{4}, P_{5},P_{6}\in PG_{2}(8)$ such that $PG_{2}(8)\backslash(K\cup\{P_{1},P_{2},P_{3},P_{4},P_{5},P_{6}\})$ contains no line.

\end{document}